\newtheorem{Theo}{Theorem}
\newtheorem{Rem}{Remark}
\newtheorem{Lem}{Lemma}
\numberwithin{equation}{section}
 \numberwithin{Lem}{section}
 \numberwithin{Defi}{section}
 \numberwithin{Theo}{section}
 \numberwithin{Rem}{section}
  \numberwithin{Coro}{section}
  \numberwithin{Fig}{section}
\def\R{\mathbb{R}}
\title{Sharp pointwise-in-time error estimate of L1 scheme for nonlinear subdiffusion equations}
\author{
Dongfang Li\thanks{School of Mathematics and Statistics, Huazhong University of Science and
Technology, Wuhan, 430074, China.
  ({\tt dfli@mail.hust.edu.cn}).
  The research of this author was supported by the National Natural Science Foundation of China under grants No.
11771162. }
\and Hongyu Qin\thanks{School of Mathematics and Statistics, Wuhan University, Wuhan, 430072, China.
  ({\tt qinhongyuhust@sina.com}).
  }
\and Jiwei Zhang\thanks{School of Mathematics and Statistics, Wuhan University, Wuhan, 430072, China.
  ({\tt jiweizhang@whu.edu.cn}).
  The research of this author was supported by the National Natural Science Foundation of China under grants No.
11771035. }
}
\begin{document}

\maketitle

\begin{abstract} An essential feature of the subdiffusion equations with the $\alpha$-order time fractional derivative is the weak singularity at the initial time.  The weak regularity of the solution is usually characterized by a regularity parameter $\sigma\in (0,1)\cup(1,2)$. Under this general regularity assumption,  we here obtain the pointwise-in-time error estimate  of the widely used L1 scheme for nonlinear subdiffusion equations. To the end, we present a refined discrete fractional-type Gr\"onwall inequality and a rigorous analysis for the truncation errors.  Numerical experiments  are provided to demonstrate the effectiveness of our theoretical analysis.

\end{abstract}

\begin{keywords}
{Sharp pointwise-in-time error estimate,  L1 scheme,  nonlinear subdiffusion equations, non-smooth solutions}
\end{keywords}

\begin{AMS}
{65M06,65M12,65M15,35R11}
\end{AMS}

\pagestyle{myheadings}
\thispagestyle{plain}
\markboth{D. Li, H. Qin and J. Zhang}
{Sharp pointwise error estimate}

\section{Introduction}\label{int}
In this paper, we consider sharp pointwise-in-time
 error estimate of  L1 scheme in time for solving
the following nonlinear subdiffusion equations
\begin{align}
~\partial_t^\alpha u-\Delta u=f(u),
\quad
x\in\Omega\times(0,T]
\label{i1}
\end{align}
with the initial and boundary conditions
\begin{align}
\begin{array}{ll}
u(x,0)=u_0(x), & x\in\Omega, \\
u(x,t)=0, & x\in\partial\Omega\times[0,T],
\end{array}
\label{i1-ib}
\end{align}
where $\Omega=(0,L)^d\subset \R^d$ ($d\geq 1$).
The time fractional
Caputo derivative is defined as
\begin{equation} \label{defin-1}
 ~\partial_t^\alpha u(x,t)
=\frac{1}{\Gamma(1-\alpha)}\int_0^t\frac{\partial
u(x,s)}{\partial s}\frac{1}{(t-s)^\alpha}ds, \quad 0<\alpha <1.
\end{equation}
Here $\Gamma(\cdot)$ denotes the Gamma function. The equations provide a powerful tool to describe anomalous diffusion in different physical situations, see e.g., \cite{gor02,hen00,liao20}. Hence, the theoretical
and numerical analysis of the models have attracted the interest of plenty of researchers.

In developing numerical methods for solving the subdiffusion
problem~\eqref{i1}, an important consideration is that the
solution~$u$ is typically less regular than in the case of a classical
parabolic PDE (as the limiting case~$\alpha\to1$).
 For instance, Jin et al. \cite{jin18} show that if the initial condition $u_0\in H_0^1(\Omega)\cap H^2(\Omega)$, the solution to problem \eqref{i1} satisfies
$\|\partial_tu(t)\|_{L^2(\Omega)}\le Ct^{\alpha-1}$.
Maskari and Karaa \cite{mas19} obtain that if $u_0\in \dot{H}^\nu(\Omega)$ with $\nu \in(0,2]$, the solution of problem \eqref{i1} satisfies
$
\|\partial_tu(t)\|_{L^2(\Omega)}\le Ct^{\nu\alpha/2-1}$, which implies that there exists a parameter $\sigma\in (0, \alpha]$ and $u_t\rightarrow\infty$ as
$t\rightarrow0^+$. One can refer to more works \cite{kop,cao16,jin-la16,li183,li191,mcl19,mcl20} on the discussion of the regularity of solutions. With loss of generality,  we assume the solution regularity satisfies
\begin{equation}\label{assm-1}
\|{\partial^m_t} u\|_{L^2(\Omega)}\leq Ct^{\sigma-m},\quad  \text{ for } m = 1,2, \text{ and } \sigma\in (0,1)\cup(1,2].
\end{equation}


Under the regularity assumption $\sigma = \alpha$ in \eqref{assm-1}, many works indicate that the convergence order with the maximum norm in time is $\mathcal{O}(\tau^\alpha)$, where $\tau$ is the temporal stepsize.
Here we refer readers to \cite{liao-li,sty17,lia19,kop21} for the L1 and L2-type schemes on the uniform meshes, \cite{jin18,lub86} for the the convolution
quadrature (CQ) Euler method and  \cite{jin18,jin19} for the CQ BDF methods.
In addition, numerical simulations show an interesting phenomenon that  the convergence order of the L1 scheme is $\mathcal{O}(\tau^\alpha)$ as $t$ tends to $0$, and  $\mathcal{O}(\tau)$  at the final time $t=T$.
It motivates much works to consider the pointwise error estimate. For linear subdiffusion equations (i.e., $f(u)=0$),
Gracia et al. \cite{gra18} proved the temporal error of L1 scheme is of $\tau t_n^{\alpha-1}$.
 Yan et el.  \cite{yan18} considered time-stepping error estimates of the modified L1 scheme.
Jin et al. \cite{jin16} showed if the initial condition $u_0(x)\in L_2(\Omega)$, the temporal error of L1 scheme is of $\tau t_n^{-1}$. After that,
they further obtained \cite{jin17}  time-stepping error estimates of some high-order BDF convolution quadrature methods.  Mustapha and McLean  \cite{mus15,mcl15} investigated  time-stepping error bounds of discontinuous Galerkin methods for fractional diffusion problems.  For the nonlinear subdiffusion equations\eqref{i1},
 Maskari and Karaa \cite{mas19} study the optimal pointwise-in-time error estimates based on the CQ Euler method.
As far as we know, it still remains open to obtain the pointwise error estimate of L1 scheme for nonlinear subdiffsuion equations.

The main goal of this paper is to present the sharp pointwise-in-time error estimate of the widely used L1 scheme for the nonlinear subdiffusion equations under the regularity assumption \eqref{assm-1}. Generally, our goals are theoretically challenging mainly due to two reasons. On the one hand, the numerical Caputo formula
always has a form of discrete convolutional summation. The temporal truncation error varies at each time level and has a close relation with the time level $n$, which leads to the consistency analysis  becomes too cumbersome to implement in practice. On the other hand,  due to the nonlinearity, the evolutions of the solutions and errors are non-monotone decreasing, which requires the rigorous and refined analysis step by step.

In this paper, we consider the L1 scheme on the uniform meshes to approximate the time fractional derivative and  central finite difference scheme with the stepsize $h$ to discretize the diffusion term.
We overcome the mentioned difficulties and paint a full
picture for sharp pointwise error estimate of the L1 scheme for the nonlinear time
fractional parabolic problems under assumption \eqref{assm-1}.  The given results indicate that
\begin{itemize}
\item Suppose $\sigma\in (0,\alpha)$, one has
   \begin{eqnarray*}
\|u^{n}-U^n\|_{\infty} \lesssim
\tau^{\sigma+1-\alpha} t_n^{\alpha-1}+ t_n^\alpha h^2, n=1,2,\cdots,N,
\end{eqnarray*}
where $u^{n}$ and $U^n$ are theoretical and numerical solutions.
The result has never been found before, even for the linear time fractional problems.

\item Suppose that  $\sigma=\alpha$, one has
   \begin{eqnarray*}
\|u^{n}-U^n\|_{\infty} \lesssim
\tau t_n^{\alpha-1}+t_n^\alpha h^2, n=1,2,\cdots,N.
\end{eqnarray*}
The pointwise error estimates are firstly given for the nonlinear problems. Especially, when $f(u)=0$, problem \eqref{i1} is reduced to the linear subdiffusion models. Then, the given result agrees with the one in \cite{gra18}.
\item  Suppose the solution is smoother (i.e., $\sigma>\alpha$),  one has
\begin{eqnarray*}
\|u^{n}-U^n\|_{\infty} \lesssim
\begin{cases}
\tau^{\sigma+1-\alpha} t_n^{\alpha-1}+ t_n^\alpha h^2,~~~~~~&\alpha<\sigma<1,\\
\tau^{2-\alpha} t_n^{\alpha+\sigma-2}+ t_n^\alpha h^2,~~~~~~&1<\sigma\leq 2,
\end{cases} n=1,2,\cdots,N.
\end{eqnarray*}
From the results, one can see that when $t_n\rightarrow 0$, the error estimate is of $\tau^\sigma$.  When $t_n$ is far away from $0$, different convergence results can be found.
Especially, the maximum error in the whole domain is of $\tau^{\min(\sigma, 2-\alpha)}$.  The results on the maximum errors  agree with  the error estimates in \cite{liao-li}.
\end{itemize}

The rest of the paper is organized as follows. In Section \ref{sec2}, we present the fully-discrete scheme and the time-stepping error estimates. In Section \ref{sec3}, we present the discrete fractional Gr\"{o}nwall inequalities and a detailed proof of the main results. In Section \ref{sec4}, we give several numerical examples to confirm the theoretical results. Finally, we have the conclusions and discussions in Section \ref{sec5}.

Throughout the paper, notation $ A\lesssim B$ implies
that there exists a constant $c$ such that $A\leq cB$.

\section{Fully discrete scheme and main convergence results}\label{sec2} In this section, we present the fully-discrete scheme and the main convergence results.

\subsection{Fully discrete scheme}
In this section, we will present the fully-discrete scheme and the main convergence results. In this study, we always assume that $u(\cdot,t)\in C^4(\Omega)$ for every $t$ so that the central finite difference method is available to approximate the diffusion term.

 Let $\mathcal{T}_\tau = \{t_n | t_n=n\tau; \; 0\leq n\leq N \}$ be a uniform
partition of $[0,T]$ with the time step $\tau=T/N$.
Let $h=L/M$ be the spatial stepsize with $M$ a give positive integer. Denote $u^n_{i_1,i_2,\cdots,i_d}=u(i_1h, i_2 h,\cdots,i_d h, t_n)$, where $i_j=1,\cdots,M-1, \;j=1,\cdots,d$ and $n=0,1,\cdots,N$.
At the grid point $(i_1h, i_2h,\cdots,i_d h)$ , the central finite difference method is given by
\begin{eqnarray} \label{central}
\sum_{j=1}^du_{x_{j}x_j}^n&=&\frac{1}{h^2}\sum_{j=1}^d (u^n_{i_1,\cdots, x_j-1,\cdots,i_d}-2 u^n_{i_1,i_2,\cdots,i_d}+u^n_{i_1,\cdots, x_j+1,\cdots,i_d}    )+R^n_{i_1,i_2,\cdots,i_d}\nonumber\\
&:=&\sum_{j=1}^d \delta_{x_j}^2 u^n_{i_1,i_2,\cdots,i_d}+R^n_{i_1,i_2,\cdots,i_d},
\end{eqnarray}
where $R^n_{i_1,i_2,\cdots,i_d}=\mathcal{O}(h^2)$ is the spatial truncation error.

The standard $L1$-approximation to the time fractional derivative is given by
\begin{eqnarray} \label{l1sch}
\partial_{t_n}^\alpha u&=&
\frac{1}{\Gamma(1-\alpha)}\sum_{j=1}^{n}\frac{u^j_{i_1,i_2,\cdots,i_d}-u^{j-1}_{i_1,i_2,\cdots,i_d}}{\tau}
\int_{t_{j-1}}^{t_{j}}\frac{1}{(t_n-s)^\alpha}ds+ r^n_{i_1,i_2,\cdots,i_d}\nonumber\\
&=&
\frac{\tau^{-\alpha}}{\Gamma(2-\alpha)}\sum_{j=1}^na_{n-j}(u^j_{i_1,i_2,\cdots,i_d}-u^{j-1}_{i_1,i_2,\cdots,i_d})
+ r^n_{i_1,i_2,\cdots,i_d}\nonumber\\
&:=& D_\tau^\alpha u^n_{i_1,i_2,\cdots,i_d} +r^n_{i_1,i_2,\cdots,i_d},
\end{eqnarray}
where $r^n$ represents the truncation error and
$
a_i=(i+1)^{1-\alpha} - i^{1-\alpha},\; i\geq 0.
$

Therefore, it holds that
\begin{eqnarray} \label{full-1}
D_\tau^\alpha  u^n_{i_1,i_2,\cdots,i_d} = &\sum_{j=1}^d \delta_{x_j}^2 u^n_{i_1,i_2,\cdots,i_d}+f(u^n_{i_1,i_2,\cdots,i_d})+r^n_{i_1,i_2,\cdots,i_d}+R^n_{i_1,i_2,\cdots,i_d}.
\end{eqnarray}

Let  $U^n_{i_1,i_2,\cdots,i_d}$ be numerical approximation to $u^n_{i_1,i_2,\cdots,i_d}$. Omitting the truncation errors in \eqref{full-1} and replacing $ u^n_{i_1,i_2,\cdots,i_d}$ by  $U^n_{i_1,i_2,\cdots,i_d}$,
we get the fully-discrete scheme
\begin{eqnarray} \label{full-2}
D_\tau^\alpha  U^n_{i_1,i_2,\cdots,i_d} = &\sum_{j=1}^d \delta_{x_j}^2 U^n_{i_1,i_2,\cdots,i_d}+f(U^n_{i_1,i_2,\cdots,i_d}).
\end{eqnarray}

\subsection{Convergence}

The convergence analysis depends heavily on the consistency error and the discrete fractional-type Gr\"onwall inequality.  For the purpose of the readability, we present the lemmas here and leave the proof in the next section.

 We now present the truncated error of L1 scheme for Caputo derivative.
\begin{Lem}\label{lem-trun}
Suppose that $u$ satisfies \eqref{assm-1}. Then, it holds that
\begin{align}\label{add-001}
|r_n|:=\left|D_{\tau}^{\alpha}u^n-\partial_{t_n}^{\alpha}u\right|\lesssim
\begin{cases}
\tau^{\sigma-\alpha} n^{-\min(1+\alpha,2-\sigma)},~~~~~&0<\sigma< 1,\\
\tau^{\sigma-\alpha} n^{-2+\sigma},&1<\sigma<2.
\end{cases}
\end{align}
\end{Lem}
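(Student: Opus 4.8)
The plan is to rewrite the truncation error as the Caputo derivative of the piecewise-linear interpolation error and then exploit the vanishing of that error at the grid nodes. Writing $\Pi_1 u$ for the piecewise-linear interpolant of $u$ in time and $e=\Pi_1 u-u$, note that $(\Pi_1 u)'$ is piecewise constant, equal to $(u^j-u^{j-1})/\tau$ on each $(t_{j-1},t_j)$, so that
\[
r_n=\frac{1}{\Gamma(1-\alpha)}\int_0^{t_n}\frac{e'(s)}{(t_n-s)^\alpha}\,ds,\qquad e(t_j)=0\ \ (0\le j\le n).
\]
I would first dispose of $n=1$ directly: using $|e'(s)|\lesssim \tau^{\sigma-1}+s^{\sigma-1}$ on $(0,t_1)$, the two elementary integrals $\int_0^{t_1}(t_1-s)^{-\alpha}\tau^{\sigma-1}\,ds$ and $\int_0^{t_1}(t_1-s)^{-\alpha}s^{\sigma-1}\,ds$ (the latter a Beta integral) are both $\lesssim \tau^{\sigma-\alpha}$, matching the claim. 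For $n\ge 2$ I would split the integral into the first interval $(0,t_1)$, the interior intervals $\bigcup_{j=2}^{n-1}(t_{j-1},t_j)$, and the last interval $(t_{n-1},t_n)$, since the solution singularity lives in the first and the kernel singularity in the last.

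On each interior interval I would integrate by parts; because $e$ vanishes at both endpoints the boundary terms drop, leaving
\[
\int_{t_{j-1}}^{t_j}\frac{e'(s)}{(t_n-s)^\alpha}\,ds=-\alpha\int_{t_{j-1}}^{t_j}\frac{e(s)}{(t_n-s)^{1+\alpha}}\,ds.
\]
Using the standard interpolation bound $|e(s)|\lesssim \tau^2\sup_{[t_{j-1},t_j]}|u''|\lesssim \tau^2 t_{j-1}^{\sigma-2}$ together with $\int_{t_{j-1}}^{t_j}(t_n-s)^{-1-\alpha}\,ds\lesssim \tau^{-\alpha}(n-j)^{-1-\alpha}$ (valid for $j\le n-1$), the interior contribution is controlled by $\tau^{\sigma-\alpha}S$ with the convolution sum $S=\sum_{k=1}^{n-2}k^{\sigma-2}(n-1-k)^{-1-\alpha}$. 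Splitting $S$ at $k\approx n/2$, on the lower half I bound $(n-1-k)^{-1-\alpha}\lesssim n^{-1-\alpha}$ and on the upper half $k^{\sigma-2}\lesssim n^{\sigma-2}$; since $1+\alpha>1$ the tail $\sum_m m^{-1-\alpha}$ converges, while the size of $\sum_{k\le n/2}k^{\sigma-2}$ (bounded for $\sigma<1$, of order $n^{\sigma-1}$ for $\sigma>1$) separates the two regimes, giving $S\lesssim n^{-\min(1+\alpha,2-\sigma)}$ for $0<\sigma<1$ and $S\lesssim n^{\sigma-2}$ for $1<\sigma<2$, exactly the asserted rates. The last interval I would handle without integration by parts: $|e'(s)|\lesssim \tau t_{n-1}^{\sigma-2}$ and $\int_{t_{n-1}}^{t_n}(t_n-s)^{-\alpha}\,ds\lesssim \tau^{1-\alpha}$ give a contribution $\lesssim \tau^{2-\alpha}t_{n-1}^{\sigma-2}\lesssim \tau^{\sigma-\alpha}n^{\sigma-2}$, which is dominated by the interior rate.

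The step I expect to be the main obstacle is the first interval, where $u''$ fails to be integrable for $\sigma<1$: the crude bound $|e'|\lesssim \tau^{\sigma-1}+s^{\sigma-1}$ only yields $\int_0^{t_1}|e'|\,ds\lesssim\tau^\sigma$ and, paired with $(t_n-s)^{-\alpha}\lesssim(n\tau)^{-\alpha}$, the far-too-weak rate $\tau^{\sigma-\alpha}n^{-\alpha}$. To recover the sharp power I would exploit the cancellation $\int_0^{t_1}e'(s)\,ds=e(t_1)-e(0)=0$, rewriting the first-interval integral as $\int_0^{t_1}e'(s)\big[(t_n-s)^{-\alpha}-(t_n-t_1)^{-\alpha}\big]\,ds$ and estimating the bracket by the mean value theorem as $\lesssim t_1(t_n-t_1)^{-1-\alpha}\lesssim \tau^{-\alpha}n^{-1-\alpha}$. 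This buys an extra factor $n^{-1}$ and gives $\tau^{\sigma-\alpha}n^{-(1+\alpha)}$, which is $\le \tau^{\sigma-\alpha}n^{-\min(1+\alpha,2-\sigma)}$ in every case. Collecting the three contributions then yields \eqref{add-001}. Throughout, the only regularity input is \eqref{assm-1}, applied in the spatial $L^2(\Omega)$ norm, with all temporal manipulations passing through by Minkowski's integral inequality.
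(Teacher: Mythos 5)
Your proof is correct and follows essentially the same route as the paper's: the same interpolation-error representation of $r_n$, a direct Beta-integral estimate for $n=1$, integration by parts on the interior intervals with the $\tau^2 t_{j-1}^{\sigma-2}$ interpolation bound and the split of the convolution sum at $k\approx n/2$, and a direct mean-value estimate on the last interval. Your cancellation trick on the first interval (subtracting $(t_n-t_1)^{-\alpha}$ and using $\int_0^{t_1}e'\,ds=0$) is just integration by parts in disguise and yields exactly the paper's bound $\tau^{\sigma-\alpha}n^{-1-\alpha}$ for the $k=0$ term.
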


The bound in \eqref{add-001} is sharp. It improves and generalizes the truncation error bound proved in \cite[Lemma 5.1]{sty17}.
Especially, when $\sigma=\alpha$, the bound is consistent to the estimate in \cite[Lemma 1]{gra18}.

We now consider a refined discrete fractional-type Gr\"onwall inequality, which will be used for convergence analysis (pointwise error esimate) of the L1 scheme studied in this paper.
\begin{Lem}[A refined discrete fractional Gr\"onwall inequality]\label{DGineq} Suppose $0<\alpha<1$ and $\tau>0$.
Let $y_i$, $0\leq i \leq N$, be a sequence of non-negative real numbers satisfying
\begin{align} \label{def-11}
D_\tau^\alpha y_n \leq \lambda y_n +\mu_1 n^{-\sigma_1} +\mu_2 n^{-\sigma_2}+\eta,\quad \quad \ \text{for}~ n=1,\dots,N,
\end{align}
where $\sigma_1>1$,$\sigma_2< 1$,$\lambda>0$, and $\mu_1,\mu_2\geq 0$.
Then there exists a constant $\tau_*=\sqrt[\alpha]{\frac{1}{2\Gamma(2-\alpha)\lambda}}$ such that $\tau<\tau^*$, it holds
\begin{align}\label{frac-re}
y_n \lesssim y_0+\mu_1\tau^\alpha n^{\alpha-1}+\mu_2\tau^\alpha n^{\alpha-\sigma_2}+\eta\tau^\alpha n^\alpha.
\end{align}
\end{Lem}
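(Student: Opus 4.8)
The plan is to prove \eqref{frac-re} by inverting the discrete operator $D_\tau^\alpha$ with its complementary (discrete-complementary-convolution) kernels and then resolving the feedback term $\lambda y_n$ by a decay-preserving induction. First I would introduce the complementary kernels $P_j$ associated to the L1 coefficients $a_i=(i+1)^{1-\alpha}-i^{1-\alpha}$, defined on the uniform mesh by the convolution identity $\sum_{i=0}^{m}P_{m-i}a_i=\tau^\alpha\Gamma(2-\alpha)$ for every $m\ge0$ (so that $P$ inverts $D_\tau^\alpha$ in the convolution sense). One checks $P_j>0$ and, from the generating function $\sum_i a_i x^i\sim\Gamma(2-\alpha)(1-x)^{\alpha-1}$ near $x=1$, the sharp asymptotics $P_j\sim \tau^\alpha j^{\alpha-1}/\Gamma(\alpha)$. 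Multiplying \eqref{def-11} by $P_{n-k}$ and summing telescopes the left side to $y_n-y_0$, giving $y_n\le y_0+\lambda\sum_{k=1}^n P_{n-k}y_k+\sum_{k=1}^n P_{n-k}(\mu_1 k^{-\sigma_1}+\mu_2 k^{-\sigma_2}+\eta)$. The diagonal coefficient is exactly $P_0=\tau^\alpha\Gamma(2-\alpha)$, so $\lambda P_0<\tfrac12$ precisely when $\tau<\tau_*$; this lets me move $\lambda P_0 y_n$ to the left and divide by $1-\lambda P_0>\tfrac12$, which is exactly where the stated restriction $\tau_*=\sqrt[\alpha]{1/(2\Gamma(2-\alpha)\lambda)}$ enters.

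Next I would establish the three ``discrete fractional integration'' estimates that produce the base rates. Using $P_j\lesssim\tau^\alpha j^{\alpha-1}$: the constant forcing gives $\sum_{k=1}^n P_{n-k}\lesssim\tau^\alpha n^\alpha$ (the discrete analogue of $I^\alpha 1$); the forcing $k^{-\sigma_2}$ with $\sigma_2<1$ gives a Beta-type convolution $\sum_{k=1}^n(n-k)^{\alpha-1}k^{-\sigma_2}\lesssim n^{\alpha-\sigma_2}B(\alpha,1-\sigma_2)$, hence $\sum_k P_{n-k}k^{-\sigma_2}\lesssim\tau^\alpha n^{\alpha-\sigma_2}$; and the forcing $k^{-\sigma_1}$ with $\sigma_1>1$ is summable, so the convolution is dominated by the indices $k\approx1$ where $(n-k)^{\alpha-1}\approx n^{\alpha-1}$, yielding $\sum_k P_{n-k}k^{-\sigma_1}\lesssim\tau^\alpha n^{\alpha-1}\zeta(\sigma_1)\lesssim\tau^\alpha n^{\alpha-1}$. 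These three bounds are exactly the $\eta$, $\mu_2$ and $\mu_1$ contributions in \eqref{frac-re}.

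Finally I would dispose of the residual feedback $\lambda\sum_{k=1}^{n-1}P_{n-k}y_k$ by a self-improving induction that preserves the pointwise decay. The key sub-estimate is that convolving $P$ against a power raises its exponent by $\alpha$: $\sum_{k=1}^{n}P_{n-k}k^{\beta}\lesssim\tau^\alpha n^{\beta+\alpha}$ for $\beta>-1$. Feeding the candidate bound back through the $\lambda$-term therefore multiplies it by $\lambda\tau^\alpha n^\alpha=\lambda t_n^\alpha$ times a Beta constant while keeping the base exponents $\alpha-1$, $\alpha-\sigma_2$, $\alpha$ intact; summing the resulting Neumann series produces a Mittag-Leffler factor $E_\alpha(c\lambda t_n^\alpha)\le E_\alpha(c\lambda T^\alpha)$, which is bounded on $[0,T]$ and absorbed into $\lesssim$. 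This step is the genuinely \emph{refined} part: one must keep the true decay $n^{\alpha-\sigma_2}$, $n^{\alpha-1}$ rather than bounding the forcing by $\max_{m\le n}\sum_k P_{m-k}g_k$, which would inflate the decaying terms (e.g.\ when $\sigma_2>\alpha$) and destroy sharpness.

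I expect the main obstacle to be the $\sigma_1>1$ forcing together with the decay-preserving induction: the convolution of a summable forcing loses memory and collapses to the homogeneous rate $n^{\alpha-1}$, so the sum must be split (near $k=1$ versus near $k=n$) to show that the endpoint contribution $\sim n^{\alpha-\sigma_1}$ is subdominant, and one must verify that the resolvent iteration reproduces the same exponent at every order instead of degrading it. An equivalent route, which I would mention as a cross-check, is to verify directly that the right-hand side of \eqref{frac-re}, with a suitable constant, is a discrete supersolution --- that is, that $D_\tau^\alpha$ applied to the power barriers $n^{\alpha-1}$, $n^{\alpha-\sigma_2}$, $n^\alpha$ dominates the corresponding forcing plus $\lambda$ times the barrier --- and then to invoke the comparison principle for $D_\tau^\alpha$.
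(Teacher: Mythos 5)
Your proposal is correct and follows essentially the same route as the paper's own proof: there, one multiplies by the complementary kernel $p_{n-j}$ (your $P_j$ up to the normalization $\tau^{\alpha}\Gamma(2-\alpha)$), telescopes via $\sum_{j=k}^{n}p_{n-j}a_{j-k}=1$, absorbs the diagonal term under exactly the same threshold $\tau_*$, proves precisely your three convolution estimates (Lemma \ref{lemmar}), and resolves the feedback by the same Neumann-type iteration---written in the paper as powers of a nilpotent upper-triangular matrix (Lemma \ref{lemma:2})---yielding the Mittag-Leffler factors that are then bounded on $[0,T]$. The only differences are cosmetic: the paper needs only the upper bound $p_n<(n+1)^{\alpha-1}$ rather than your sharp generating-function asymptotics, and your supersolution cross-check plays no role there.
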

\begin{Rem}The case of $\mu_1=0$ and $\mu_2=0$ is investigated  in \cite[Theorem 2.6]{jin18} and \cite[Lemma 3.1]{li181} and \cite{liao-li}, respectively. The above Gr\"onwall inequality is a refined version, which is suitable to the pointwise error estimate for a class of scheme.
\end{Rem}

In this paper,  we mainly focus on the pointwise error estimate in time. Without loss of generality, we assume the nonlinear term $f:\mathbb{R}\rightarrow\mathbb{R}$
is Lipschitz continuous as
\begin{align}
|f(\xi_1)-f(\xi_2)|\leq L|\xi_1-\xi_2|,~~ \quad \textrm{for }\quad \xi_1,\xi_2\in\mathbb{R},
\label{LC}
\end{align}
where $L$ denotes the Lipschitz coefficient. In this assumption, we have following sharp pointwise error
 estimate
of scheme \eqref{full-2}.

\begin{Theo} \label{main}
Suppose that the system \eqref{i1}-\eqref{i1-ib} has a unique solution satisfying \eqref{assm-1}. Then, there exists a positive constant $\tau_0$,
such that when $\tau\leq \tau_0$,
the finite difference system \eqref{full-2}
admits a unique solution $U^n$, satisfying, for $n=1,2,\cdots,N$, it holds
\begin{eqnarray}
\|u^{n}-U^n\|_{\infty} \lesssim
\begin{cases}
\tau^{\sigma+1-\alpha} t_n^{\alpha-1}+ t_n^\alpha h^2,~~~~~~&0<\sigma<1,\\
\tau^{2-\alpha} t_n^{\alpha+\sigma-2}+ t_n^\alpha h^2,~~~~~~&1<\sigma\leq 2,
\end{cases}
\label{error-1}
\end{eqnarray}
where $u^n=[u_{1,1,\cdots,1}, u_{2,1,\cdots,1},\cdots, u_{M-1,1,\cdots,1}, u_{1,2,\cdots,1}, u_{2,2,\cdots,1},\cdots, u_{M-1,2,\cdots,1},\cdots, \\u_{1,1,\cdots,M-1},
 u_{2,1,\cdots,M-1},\cdots, u_{M-1,1,\cdots,M-1}]^T$
and $U^n=[U_{1,1,\cdots,1}, U_{2,1,\cdots,1},\cdots, \\
U_{M-1,1,\cdots,1}, U_{1,2,\cdots,1}, U_{2,2,\cdots,1},\cdots, U_{M-1,2,\cdots,1},\cdots, U_{1,1,\cdots,M-1}, U_{2,1,\cdots,M-1},\cdots,\\ U_{M-1,1,\cdots,M-1}]^T.$
\end{Theo}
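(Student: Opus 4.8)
The plan is to derive a single scalar fractional difference inequality for the maximum-norm error and then invoke the two lemmas above. Write $e^n=u^n-U^n$ for the grid error vector, noting $e^0=0$ since the initial data are imposed exactly. Subtracting the scheme \eqref{full-2} from the consistency identity \eqref{full-1} gives, at every interior node,
\begin{equation*}
D_\tau^\alpha e^n_{i_1,\dots,i_d}=\sum_{j=1}^d \delta_{x_j}^2 e^n_{i_1,\dots,i_d}+\big(f(u^n_{i_1,\dots,i_d})-f(U^n_{i_1,\dots,i_d})\big)+r^n_{i_1,\dots,i_d}+R^n_{i_1,\dots,i_d},
\end{equation*}
with $e^n=0$ on $\partial\Omega$. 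Before estimating, I would record existence and uniqueness of $U^n$: the one-step map defined by \eqref{full-2} involves the operator $\frac{\tau^{-\alpha}}{\Gamma(2-\alpha)}I-\sum_j\delta_{x_j}^2$, which is strictly diagonally dominant with $\ell^\infty$-stable inverse, while $f$ is Lipschitz by \eqref{LC}; hence for $\tau$ small the map is a contraction and Banach's fixed point theorem yields a unique $U^n$ whenever $\tau\le\tau_0$.

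The central step is to collapse this vector identity to a scalar inequality for $E_n:=\|e^n\|_{\infty}$. At level $n$, choose a node $i^{*}$ where $|e^n|$ is maximal; replacing $e$ by $-e$ if necessary, assume $e^n_{i^{*}}=E_n\ge 0$. Since $e^n_{i^{*}}$ is a spatial maximum and $e^n$ vanishes on the boundary, every second difference obeys $\delta_{x_j}^2 e^n_{i^{*}}\le 0$, so the diffusion term drops out with the favorable sign. For the fractional term I would use that the L1 weights $a_i=(i+1)^{1-\alpha}-i^{1-\alpha}$ are positive and strictly decreasing: after summation by parts, $D_\tau^\alpha e^n_{i^{*}}$ equals $\frac{\tau^{-\alpha}}{\Gamma(2-\alpha)}$ times $e^n_{i^{*}}$ plus nonpositive weights acting on $e^0_{i^{*}},\dots,e^{n-1}_{i^{*}}$ whose weights sum to $-1$. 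Bounding $e^j_{i^{*}}\le E_j$ in these nonpositive-weighted terms then gives $D_\tau^\alpha e^n_{i^{*}}\ge D_\tau^\alpha E_n$. Combining this with $|f(u^n_{i^{*}})-f(U^n_{i^{*}})|\le L E_n$ and $|r^n_{i^{*}}|,|R^n_{i^{*}}|\le\|r^n\|_{\infty},\|R^n\|_{\infty}$ produces
\begin{equation*}
D_\tau^\alpha E_n\le L\,E_n+\|r^n\|_{\infty}+\|R^n\|_{\infty},\qquad n=1,\dots,N.
\end{equation*}

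It remains to feed the truncation bounds into Lemma \ref{DGineq} with $\lambda=L$ and $E_0=0$. The spatial error satisfies $\|R^n\|_{\infty}\lesssim h^2$, a constant-in-$n$ forcing, i.e.\ the $\eta$-term with $\eta\sim h^2$, which Lemma \ref{DGineq} converts into $\eta\tau^\alpha n^\alpha=h^2 t_n^\alpha$. For $0<\sigma<1$, Lemma \ref{lem-trun} gives $\|r^n\|_{\infty}\lesssim\tau^{\sigma-\alpha}n^{-\min(1+\alpha,2-\sigma)}$ with exponent strictly above $1$, i.e.\ the $\mu_1 n^{-\sigma_1}$ term with $\mu_1\sim\tau^{\sigma-\alpha}$; the lemma yields $\mu_1\tau^\alpha n^{\alpha-1}=\tau^{\sigma}n^{\alpha-1}$, and since $n^{\alpha-1}=\tau^{1-\alpha}t_n^{\alpha-1}$ this is $\tau^{\sigma+1-\alpha}t_n^{\alpha-1}$. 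For $1<\sigma\le 2$, the bound $\|r^n\|_{\infty}\lesssim\tau^{\sigma-\alpha}n^{-(2-\sigma)}$ has exponent $2-\sigma<1$, i.e.\ the $\mu_2 n^{-\sigma_2}$ term with $\sigma_2=2-\sigma$ and $\mu_2\sim\tau^{\sigma-\alpha}$; the lemma gives $\mu_2\tau^\alpha n^{\alpha-\sigma_2}=\tau^{\sigma}n^{\alpha+\sigma-2}=\tau^{2-\alpha}t_n^{\alpha+\sigma-2}$ (the endpoint $\sigma=2$ following by the same computation). Adding the two contributions reproduces \eqref{error-1}.

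The main obstacle I anticipate is the scalar reduction in the second paragraph: the maximizing node $i^{*}$ changes with $n$, so one must check that the comparison $D_\tau^\alpha e^n_{i^{*}}\ge D_\tau^\alpha E_n$ is legitimate across the entire history $e^0,\dots,e^{n-1}$, which hinges precisely on the positivity and monotonicity of the L1 weights together with the homogeneous boundary values of $e^n$. Once that inequality is secured, everything else is bookkeeping: assigning each truncation term to the correct slot ($\mu_1$, $\mu_2$, or $\eta$) in Lemma \ref{DGineq} and rewriting powers of $n$ as powers of $t_n$ via $t_n=n\tau$.
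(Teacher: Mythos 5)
Your proposal follows essentially the same route as the paper: collapse the error equation at a maximizing node to the scalar inequality $D_\tau^\alpha\|e^n\|_\infty \le L\|e^n\|_\infty+\|r^n\|_\infty+\|R^n\|_\infty$ (your sign argument $\delta_{x_j}^2 e^n_{i^*}\le 0$ at an interior maximum, combined with the positivity and monotonicity of the L1 weights, is the paper's absolute-value cancellation of the $2/h^2$ terms in disguise), and then feed Lemma \ref{lem-trun} into Lemma \ref{DGineq} with exactly the same slot assignments ($\mu_1$ with $\sigma_1=\min(1+\alpha,2-\sigma)>1$ for $0<\sigma<1$, $\mu_2$ with $\sigma_2=2-\sigma<1$ for $1<\sigma\le 2$, and $\eta\sim h^2$). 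The only differences are additions on your side—the contraction argument for existence and uniqueness of $U^n$, which the paper asserts but does not detail, and your explicit justification that the comparison survives the $n$-dependence of the maximizing node—both of which are correct and, if anything, more careful than the paper's own write-up.
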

\begin{proof} We firstly take the proof of one-dimensional problem for an example.
The exact solution of problem \eqref{i1} satisfies
\begin{align} \label{semi-ex}
D_\tau^\alpha u^n_i=\delta_x^2u_i^n+f(u_i^n)+r^n_i+R_i^n,
\end{align}
where the temporal truncation error $r^n_i = \partial_{t_n}^\alpha u - D_\tau^\alpha u^n_i$ and spatial truncation error $R_i^n= u_{xx}(x_i,t^n)-\delta_x^2u_i^n = \mathcal{O}( h^2).$

Suppose that $\|e^n\|_\infty=e^n_{i_0}$ and let  $e^n_i=u^n_i-U_n^i$. The error equation at the grid point $(x_{i_0},t_n)$ satisfies
\begin{align*}
D_\tau^\alpha e^n_{i_0}=\delta_x^2e_{i_0}^n+f(u^n_{i_0})-f(U^n_{i_0})+r^n_{i_0}+R_{i_0}^n,
\end{align*}
which can be rewritten as
\begin{eqnarray*}
\Big(\frac{\tau^{\!-\!\alpha}}{\Gamma(2\!-\!\alpha)} a_0\!+\!\frac{2}{h^2}\Big) e_{i_0}^n\!&=&\!\frac{e_{i_0\!-\!1}^n+e_{i_0+1}^n}{h^2}+\frac{\tau^{\!-\!\alpha}}{\Gamma(2\!-\!\alpha)}\sum_{j=1}^{n-1}(a_{n-j-1}-a_{n-j}) e^j_{i_0}\nonumber\\
&&+f(u^n_{i_0})-f(U^n_{i_0})+r^n_{i_0}+R_{i_0}^n.
\end{eqnarray*}

Therefore, we get
\begin{eqnarray} \label{semi-ex2}
&&\Big(\frac{\tau^{-\alpha}}{\Gamma(2-\alpha)} a_0+\frac{2}{h^2}\Big) |e_{i_0}^n|\nonumber\\
&=&\Big|\frac{e_{i_0-1}^n+e_{i_0+1}^n}{h^2}+\frac{\tau^{-\alpha}}{\Gamma(2-\alpha)}\sum_{j=1}^{n-1}(a_{n-j-1}-a_{n-j}) e^j_{i_0}+f(u^n_{i_0})-f(U^n_{i_0})+r^n_{i_0}+R_{i_0}^n\Big|\nonumber\\
&=&\Big|\frac{e_{i_0-1}^n+e_{i_0+1}^n}{h^2}+\frac{\tau^{-\alpha}}{\Gamma(2-\alpha)}\sum_{j=1}^{n-1}(a_{n-j-1}-a_{n-j}) e^j_{i_0}+f(u^n_{i_0})-f(U^n_{i_0})+r^n_{i_0}+R_{i_0}^n\Big|\nonumber\\
&\leq &\Big|\frac{2e_{i_0}^n}{h^2}\Big|+\frac{\tau^{-\alpha}}{\Gamma(2-\alpha)}\sum_{j=1}^{n-1}(a_{n-j-1}-a_{n-j}) |e^j_{i_0}|+L|e_{i_0}^n|+|r^n_{i_0}|+|R_{i_0}^n|,
\end{eqnarray}
where we use the facts $a_i>a_{i+1}$.

Equation \eqref{semi-ex2} further implies that
\begin{eqnarray*}
\Big(\frac{\tau^{-\alpha}}{\Gamma(2-\alpha)} a_0\Big) |e_{i_0}^n|&\leq &\frac{\tau^{-\alpha}}{\Gamma(2-\alpha)}\sum_{j=1}^{n-1}(a_{n-j-1}-a_{n-j}) |e^j_{i_0}|+L|e_{i_0}^n|+|r^n_{i_0}|+|R_{i_0}^n|,
\end{eqnarray*}
which can be rewritten as
\begin{eqnarray}\label{semi-ex3}
D_\tau^\alpha |e_{i_{0}}^n|\leq L|e_{i_0}^n|+|r^n_{i_0}|+|R_{i_0}^n|.
\end{eqnarray}

For $0\leq \sigma<1$,  it holds that $|r^n_{i_0}|\lesssim\tau^{\sigma-\alpha} n^{-\min(1+\alpha,2-\alpha) }$ with $\min(1+\alpha,2-\alpha)>1$ (by lemma \ref{lemmar}). Together with \eqref{semi-ex3} and Lemma \ref{DGineq}, we have
\[ \|e^n\|_\infty = |e_{i_{0}}^n| \lesssim    \tau^\sigma n^{\alpha-1} + h^2  = \tau^{\sigma+1-\alpha} t_n^{\alpha-1}+t_n^\alpha h^2.   \]

For $1< \sigma\leq 2$,  it holds that $|r^n_{i_0}|\lesssim\tau^{\sigma-\alpha} n^{\sigma-2 }$ with $2-\sigma<1$ (by lemma \ref{lemmar}). Together with \eqref{semi-ex3} and Theorem \ref{DGineq}, we have
\[ \|e^n\|_\infty = |e_{i_{0}}^n| \lesssim    \tau^\sigma n^{\alpha+\sigma-2} + h^2  = \tau^{2-\alpha} t_n^{\alpha+\sigma-2}+  t_n^\alpha h^2.   \]
 The extension to the multi-dimensional problems can be obtained similarly. This completes the proof of the main results.
\end{proof}

\begin{Rem} ~The convergence results imply that,  when $t_n\rightarrow 0$, it holds that
\begin{eqnarray} \label{001}
\max_{1\leq n \leq N}\|u^{n}-U^n\|_{\infty} \lesssim \tau^\sigma +\tau^\alpha h^2.
\end{eqnarray}
When $t$ is far away form $0$, it holds that
\begin{eqnarray}
\|u^{n}-U^n\|_{\infty} \lesssim
\begin{cases}
\tau^{\sigma+1-\alpha}+ h^2,~~~~~~&\sigma \in (0,1),\\
\tau^{2-\alpha} +  h^2,~~~~~~&\sigma \in (1,2].
\end{cases}
\label{error-3}
\end{eqnarray}
Meanwhile,  if we test the maximum error in the whole domain $\Omega \times[0,T]$, we have
\begin{eqnarray}
\max_{1\leq n \leq N}\|u^{n}-U^n\|_{\infty} \lesssim
\begin{cases}
\tau^{\sigma}+ h^2,~~~~~~&\sigma \in (0,1)\cup (1,2-\alpha),\\
\tau^{2-\alpha} +  h^2,~~~~~~&\sigma \in [2-\alpha,2].
\end{cases}
\label{error-2}
\end{eqnarray}
\end{Rem}

\begin{Rem} Scheme \eqref{full-2} is fully implicit. If the nonlinear term is approximated by using the Newton linearized method, we have the following fully-discrete scheme, i.e.,
\begin{eqnarray} \label{full-add1}
D_\tau^\alpha  U^n_{i_1,i_2,\cdots,i_d} &= &\sum_{j=1}^d \delta_{x_j}^2 U^n_{i_1,i_2,\cdots,i_d}+f_1(U^{n-1}_{i_1,i_2,\cdots,i_d})\nonumber\\
&&+f(U^{n-1}_{i_1,i_2,\cdots,i_d})(U^{n}_{i_1,i_2,\cdots,i_d}-U^{n-1}_{i_1,i_2,\cdots,i_d}),
\end{eqnarray}
where $f_1(U^{n-1}_{i_1,i_2,\cdots,i_d})=\frac{\partial}{\partial u}f|_{u=U^{n-1}_{i_1,i_2,\cdots,i_d}}$. We can have the same error estimates as \eqref{error-1}. The proof is similar
to that done above. Meanwhile, the proof of the convergence results is based on  Lipschitz condition. If $f\in C^1(\mathbb{R})$, the main results still hold. This is because
\begin{eqnarray}
\| U^{n}\|_{\infty} \leq \| u^{n}\|_{\infty} + \|u^n- U^{n}\|_{\infty}\lesssim  \| u^{n}\|_{\infty} + 1,
\label{inverse}
\end{eqnarray}whenever the stepsizes are sufficiently small. Due to the boundedness of $\|U^{n}\|_{\infty}$,
 we have
 \[
 |f(u^{n}_{i_1,i_2,\cdots,i_d})-f(U^{n}_{i_1,i_2,\cdots,i_d})|
\lesssim |u^{n}_{i_1,i_2,\cdots,i_d}-U^{n}_{i_1,i_2,\cdots,i_d}|.
\]
Then, the results  can be proved by using similar method.
\end{Rem}

\begin{Rem}The assumption $\sigma \in (0,\alpha]$ is reasonable and widely accepted. Here we refer readers to \cite{mas19,jin19} for the detailed theoretical results. Suppose that the solution is smoother, i.e.,
$\sigma>\alpha$, some additional hypothesis should be added (see \cite{sty17}). However, it is still quite restrictive due to the strong hypotheisis. In this work, we assume
that $\sigma\in(0,1)\cup(1,2)$ in order to make the current analysis extendable.
\end{Rem}

\begin{Rem} In the previous results, most time-stepping error estimates of L1 scheme focus on the linear subdiffusion equations with reaction terms in the type of $f(u) = -u$ and under the assumption $\sigma=\alpha$ \cite{sty17,gra18,jin19} . For the reaction terms in the type of $f(u) = u$ or nonlinear problems, the proof is essentially different, which requires a refined discrete fractional-type Gr\"onwall inequality presented in Lemma \ref{DGineq} above.
\end{Rem}

\begin{Rem}
 There are some error estimates of L1 scheme for nonlinear problems under the assumption $\sigma=2$, e.g., \cite{li183,li181}. In such cases, the truncation error is independent of $n$. Then,  the convergence order is  $2-\alpha$ and unchanged. Such results can be concluded from the present results, but their proof
is totally different.
\end{Rem}

\begin{Rem} In \cite{liao-li}, another version of the discrete fractional-type Gr\"onwall inequality is developed. Based on the inequality and the regularity assumption \eqref{assm-1}, one has the optimal error estimate $\mathcal{O}(\tau^\sigma)$ for uniform time mesh, and $\mathcal{O}(\tau^{\min\{\gamma\sigma,2-\alpha\}})$ for graded mesh, where $\gamma$ represents the parameter reflecting the graded mesh $t_k = T(k/N)^\gamma$. Note that in \cite{liao-li} there is a maximum in the discrete fractional-type Gr\"onwall inequality and the kernels are hard to explicitly be expressed, it fails to obtain the pointwise error estimate.
\end{Rem}

\section{Proof of the lemmas}\label{sec3}
In this section, we present a detailed proof of the lemmas in the previous section.

\subsection{ Proof of Lemma \ref{lem-trun} }
\begin{proof}
We now estimate the truncation errors \eqref{add-001} in two cases.

{\bf Case A:} n=1.
It holds that
\begin{eqnarray}\label{tes-0}
&&\Big|D_{\tau}^{\alpha}u^1-\partial_{t_1}^{\alpha}u\Big|\nonumber\\
&=&\Big|\frac{1}{\Gamma(1-\alpha)}\int_{0}^{t_{1}}(t_1-s)^{-\alpha}[u(x,\xi_1)-\frac{\partial}{\partial s}u(x,s)]ds\Big|~~~\quad (\xi_1\in(0,t_1))\nonumber\\
&\leq & \left|\frac{1}{\Gamma(1-\alpha)}\int_{0}^{t_{1}}(t_1-s)^{-\alpha}\frac{\partial}{\partial \xi_1}u(x,\xi_1) ds\right|+\left|\frac{1}{\Gamma(1-\alpha)}\int_{0}^{t_{1}}(t_1-s)^{-\alpha}\frac{\partial}{\partial s}u(x,s)ds\right|\nonumber\\
&\lesssim & \left|\int_{0}^{t_{1}}(t_1-s)^{-\alpha}s^{\sigma-1} ds\right|\nonumber\\
&=& \left| t_1^{\sigma-\alpha}\beta(1-\alpha,\sigma) \right|\nonumber\\
&\lesssim & \tau^{\sigma-\alpha}.
\end{eqnarray}

{\bf Case B: $n>1$.} It follows from the definitions of the discrete and continuous fractional operator that
\begin{align}\label{tes-1}
D_{\tau}^{\alpha}u^n-\partial_{t_n}^{\alpha}u&=\sum\limits_{k=0}^{n\!-\!1}\frac{1}{\Gamma(1\!-\!\alpha)}\int_{t_k}^{t_{k\!+\!1}}(t_n\!-\!s)^{\!-\!\alpha}[\frac{u^{k\!+\!1}\!-\!u^k}{\tau}-\frac{\partial}{\partial s}u(x,s)]ds\nonumber\\
&=
\sum\limits_{k=0}^{n\!-\!1}\frac{\!-\!\alpha}{\Gamma(1\!-\!\alpha)}\int_{t_k}^{t_{k+1}}(t_n\!-\!s)^{\!-\!\alpha\!-\!1}[\frac{u^{k+1}-u^k}{\tau}(s-t_k)-(u(x,s)-u^k)]ds\nonumber\\
:&=\sum\limits_{k=0}^{n-1}R_{nk}.
\end{align}

For $1\leq k \leq n-1$, by standard interpolation theory, it holds that
\begin{align*}
\left|R_{nk}\right|&\lesssim \tau^2(\max\limits_{s\in[t_k,t_{k+1}]}\left|u_{tt}(x,s)\right|\int_{t_k}^{t_{k+1}}(t_n-s)^{-\alpha-1}ds)\nonumber\\
&\lesssim\tau^3t_k^{\sigma-2}(t_n-t_{k+1})^{-\alpha-1}\nonumber\\
&=\tau^{\sigma-\alpha}k^{\sigma-2}(n-k+1)^{-\alpha-1}.
\end{align*}
As a result, we get
\begin{align}\label{tes-2}
\sum\limits_{k=1}^{\lceil n/2\rceil-1}\left|R_{nk}\right|
&\lesssim
\tau^{\sigma-\alpha}n^{-(1+\alpha)}\sum\limits_{k=1}^{\lceil\frac{n}{2}\rceil-1}k^{\sigma-2}
\lesssim
\begin{cases}
\tau^{\sigma-\alpha}n^{-(1+\alpha)},~~~~~~~&0<\sigma<1,\\
\tau^{\sigma-\alpha}n^{-2-\alpha+\sigma},~~~~&1<\sigma\leq 2.
\end{cases}
\end{align}
For $\lceil n/2\rceil\leq k \leq n-1$, it holds that
\begin{align}\label{tes-3}
\left|R_{nk}\right|&\lesssim\tau^{2}t_{k}^{\sigma-2}\int_{t_k}^{t_{k+1}}(t_n-s)^{-\alpha-1}ds\nonumber\\
&= \tau^{\sigma}k^{\sigma-2}\int_{t_k}^{t_{k+1}}(t_n-s)^{-\alpha-1}ds\nonumber\\
&\lesssim \tau^{\sigma}n^{\sigma-2}\int_{t_k}^{t_{k+1}}(t_n-s)^{-\alpha-1}ds.
\end{align}
Therefore,
\begin{align}
\sum\limits_{k=\lceil n/2\rceil}^{n-2}\left|R_{nk}\right|\lesssim \tau^{\sigma}n^{\sigma-2}\int_{t_{\ulcorner n/2\rceil}}^{t_{n-2}}(t_n-s)^{-\alpha-1}ds \lesssim \tau^{\sigma-\alpha}n^{\sigma-2}.
\end{align}
For $k=0$, noting that
\begin{align*}
\left|\frac{u^1-u^0}{\tau}(s-0)-(u(x,s)-u^0)\right|\leq 2\int_0^{t_1}\left|u_t(x,s)\right|ds\leq 2\tau^{\sigma},
\end{align*}
we get
\begin{align}\label{tes-4}
\left|R_{n0}\right| &=\left|\frac{-\alpha}{\Gamma(1-\alpha)}\int_{0}^{t_1}(t_n-s)^{-\alpha-1}[\frac{u^1-u^0}{\tau}(s-0)-(u(x,s)-u^0)]ds\right|\nonumber\\
&\lesssim\tau^{\sigma}\int_0^{t_1}(t_n-s)^{-\alpha-1}ds\nonumber\\
&=\tau^{\sigma}\tau(t_n-t_1)^{-\alpha-1}\nonumber\\
&=\tau^{\sigma-\alpha}n^{-\alpha-1}.
\end{align}
For $k=n-1$, it holds that
\begin{align}\label{tes-5}
\left|T_{n,n-1}\right|& =\frac{1}{\Gamma(1-\alpha)}\int_{t_{n-1}}^{t_{n}}(t_n-s)^{-\alpha}[\frac{u^{n}-u^{n-1}}{\tau}-\frac{\partial}{\partial s}u(x,s)]ds\nonumber\\
&\leq \frac{1}{\Gamma(1-\alpha)} \Big| \frac{\partial}{\partial s}u(x,\xi_{n-1}^*)-\frac{\partial}{\partial s}u(x,\xi_{n-1}^{**})\Big|\int_{t_{n-1}}^{t_{n}}(t_n-s)^{-\alpha}ds\nonumber\\
&\lesssim \tau\Big| \frac{\partial^2}{\partial s^2}u(x,\xi_{n-1}^{***})\Big|\int_{t_{n-1}}^{t_{n}}(t_n-s)^{-\alpha}ds\nonumber\\
&\leq \tau^{2-\alpha}t_{n-1}^{\sigma-2}\nonumber\\
&=\tau^{\sigma-\alpha}n^{\sigma-2},
\end{align}
where $\xi_{n-1}^*, \xi_{n-1}^{**}$ and $\xi_{n-1}^{***}$ are constants between $t_{n-1}$ and $t_n$.

Now, together with the estimates \eqref{tes-1}-\eqref{tes-5}, we obtain the finial results.
\end{proof}

\subsection{Discrete complementary convolutions}

In order to prove the refined discrete fractional-type Gr\"onwall inequality, we first introduce the useful properties of the discrete complementary convolutions. They are given in \cite{liao-li} can be reduced to the following lemma by removing the factor with respect to $\tau$.
\begin{Lem}[\cite{gra18}]\label{lemma:recursionCoefficient}
Let $\{ p_n \}$ be a sequence defined by
\begin{align}
p_{0}=1,\quad p_{n}=\sum_{j=1}^{n}(a_{j-1}-a_j)p_{n-j},\quad n\geq1.
\label{p}
\end{align}
Then it holds that
\begin{itemize}
\item [{(i)}]\quad $ p_n <(n+1)^{\alpha-1}$,
\quad $1\leq k\leq n$,
\label{Lem-0}
\item [{(ii)}]\quad  $\sum_{j=k}^{n}p_{n-j}a_{j-k}=1$,
\quad $1\leq k\leq n$,
\label{Lem-1}
\item [{(iii)}] \quad $\Gamma(2-\alpha)\sum_{j=1}^{n}p_{n-j}\leq\frac{n^{\alpha}}{\Gamma(1+\alpha)}$.
\label{Lem-2}
\end{itemize}
\end{Lem}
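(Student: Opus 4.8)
The plan is to establish the three properties in the order (ii), (iii), (i), since the convolution identity (ii) is the engine that drives the comparison argument for (iii), while the sharp per-term bound (i) is the most delicate and is proved last by strong induction. First I would record the elementary facts about the kernel $a_i=(i+1)^{1-\alpha}-i^{1-\alpha}$: because $x\mapsto x^{1-\alpha}$ is strictly concave on $[0,\infty)$ for $0<\alpha<1$, its forward differences satisfy $a_0=1>a_1>a_2>\cdots>0$, so the recursion weights $a_{j-1}-a_j$ in \eqref{p} are strictly positive, and an immediate induction on $n$ gives $p_n>0$ for all $n$. For (ii), the substitution $m=j-k$ reduces the claim to the single convolution identity $\sum_{m=0}^{N}a_m p_{N-m}=1$ for every $N\ge0$, which I would prove by induction on $N$. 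The base case $N=0$ is $a_0p_0=1$; in the inductive step I would isolate the $m=0$ term, replace $p_N$ by its defining recursion \eqref{p}, and use the telescoping collapse $(a_{m-1}-a_m)+a_m=a_{m-1}$ to reduce the sum to $\sum_{m=0}^{N-1}a_m p_{N-1-m}$, which equals $1$ by hypothesis.

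For (iii) I would exploit that (ii) says exactly $p\ast a=\mathbf 1$, which yields a discrete Duhamel principle: the solution of $\sum_{j=1}^n a_{n-j}(v_j-v_{j-1})=F_n$ with $v_0=0$ is $v_n=\sum_{j=1}^n p_{n-j}F_j$ (swap the order of summation and use $\sum_l a_{\,\cdot\,-l}p_l=1$). Writing $S_n:=\sum_{j=1}^n p_{n-j}=\sum_{k=0}^{n-1}p_k$, one has $S_n-S_{n-1}=p_{n-1}$, and (ii) again shows $S_n$ solves this discrete equation with $F_n\equiv1$. I would then compare $S_n$ against the grid restriction $g_n=w(t_n)/(\tau^\alpha\Gamma(2-\alpha))=n^\alpha/(\Gamma(1+\alpha)\Gamma(2-\alpha))$ of the continuous solution $w(t)=t^\alpha/\Gamma(1+\alpha)$ of $\partial_t^\alpha w=1$, so that the target bound is precisely $S_n\le g_n$. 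Applying the same operator to $g$ gives $\sum_{j=1}^n a_{n-j}(g_j-g_{j-1})=D_\tau^\alpha w_n$, so the comparison reduces to the supersolution inequality $D_\tau^\alpha w_n\ge\partial_t^\alpha w(t_n)=1$. This I would obtain from the interval-wise truncation representation used in the proof of Lemma \ref{lem-trun}: on each $[t_k,t_{k+1}]$ the integrand carries the mean-zero factor $w'(s)-\bar w_k$, with $\bar w_k$ the average slope, and since $w'(t)=\frac{\alpha}{\Gamma(1+\alpha)}t^{\alpha-1}$ is decreasing this factor changes sign from $+$ to $-$; pairing it against the increasing weight $(t_n-s)^{-\alpha}$ forces each interval contribution to be $\le 0$, whence $\partial_t^\alpha w(t_n)-D_\tau^\alpha w_n\le 0$. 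Feeding the resulting nonnegative right-hand side of $g-S$ into the Duhamel formula with $p_{n-j}\ge 0$ gives $g_n-S_n\ge 0$, which is (iii).

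Finally, for (i) I would argue by strong induction, assuming $p_m<(m+1)^{\alpha-1}$ for all $m<n$ and substituting into \eqref{p} to get $p_n<\sum_{j=1}^n(a_{j-1}-a_j)(n-j+1)^{\alpha-1}$. The step I expect to be the main obstacle is the purely deterministic kernel inequality $\sum_{j=1}^n(a_{j-1}-a_j)(n+1-j)^{\alpha-1}\le(n+1)^{\alpha-1}$, which is what closes the induction. A shortcut through (iii) plus monotonicity of $p_n$ does \emph{not} suffice: it only yields the weaker prefactor $1/(\Gamma(1+\alpha)\Gamma(2-\alpha))>1$ in front of $n^{\alpha-1}$, so the sharp factor-one bound genuinely requires a direct estimate. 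I would attack the kernel inequality by writing $a_{j-1}-a_j$ as the negative second difference of the concave profile $x^{1-\alpha}$ and $(n+1-j)^{\alpha-1}$ as a value of the convex decreasing test function $\phi(x)=x^{\alpha-1}$, then combine summation by parts with the convexity of $\phi$ to dominate the discrete convolution by the integral it is designed to mimic; the finite telescoping identity $\sum_{j=1}^n(a_{j-1}-a_j)=1-a_n$ offers an alternative handle on the remainder. Since both sides of the inequality tend to equality as $\alpha\to 1$, the estimate is sharp and must be carried out carefully rather than through crude monotonicity bounds, and this is where the bulk of the technical work lies.
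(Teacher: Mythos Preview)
The paper does not prove this lemma at all: it is quoted as a known result, attributed to \cite{gra18} (with a nod to \cite{liao-li}), and is invoked without argument. So there is no in-paper proof to compare your proposal against.

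On its own merits: your treatment of (ii) by induction on the single identity $\sum_{m=0}^{N}a_m p_{N-m}=1$ is the standard one and is correct. Your approach to (iii) via the discrete Duhamel representation and the supersolution inequality $D_\tau^\alpha w_n\ge\partial_{t_n}^\alpha w=1$ for $w(t)=t^\alpha/\Gamma(1+\alpha)$ is a clean and valid route that recovers the sharp constant $1/\Gamma(1+\alpha)$; the sign argument you sketch (decreasing $w'$ paired with the increasing weight $(t_n-s)^{-\alpha}$) does yield the required inequality on each subinterval.

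The only genuine gap is in (i). You correctly reduce the strong induction to the kernel inequality
\[
\sum_{j=1}^{n}(a_{j-1}-a_j)(n+1-j)^{\alpha-1}\le(n+1)^{\alpha-1},
\]
and you correctly observe that it cannot be obtained from crude bounds: the weights $a_{j-1}-a_j>0$ sum to $1-a_n<1$, but every factor $(n+1-j)^{\alpha-1}$ \emph{exceeds} $(n+1)^{\alpha-1}$, so the naive estimate points the wrong way. Your proposed line of attack (summation by parts, exploiting that $a_{j-1}-a_j$ is the negative second difference of the concave $x^{1-\alpha}$ against the convex decreasing $x^{\alpha-1}$) is the right instinct, but you stop before carrying it out, and this is precisely the step where the work lies. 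As it stands, the proposal for (i) is a plan rather than a proof; the inequality is true but its verification is not yet supplied.
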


\begin{Lem}\label{lemmar}
Let $\{ p_n \}$ be a sequence defined in \eqref{p}. Then, it holds that
\begin{align}
  \frac{1}{\Gamma(1-\gamma)}\sum_{j=1}^n p_{n-j}j^{-\gamma} \lesssim
\begin{cases}
 \frac{n^{\alpha-1}}{\Gamma(1-\gamma)},~~~~~&\gamma>1,\\
 \frac{n^{\alpha-\gamma}}{\Gamma(1-\gamma+\alpha)},&\gamma<1.
\end{cases}
\end{align}
\end{Lem}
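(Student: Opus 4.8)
The plan is to estimate $S_n := \sum_{j=1}^n p_{n-j}\, j^{-\gamma}$ using only the two qualitative features of $\{p_n\}$ recorded in Lemma \ref{lemma:recursionCoefficient}: the pointwise decay $p_m < (m+1)^{\alpha-1}$ from (i), and the summed bound $\Gamma(2-\alpha)\sum_{j=1}^n p_{n-j} \lesssim n^\alpha$ from (iii). The key structural observation is that $p_{n-j}$ is large (of order $n^{\alpha-1}$) precisely when $j$ is small, which is exactly where $j^{-\gamma}$ peaks, whereas $j^{-\gamma}$ is small (of order $n^{-\gamma}$) when $j$ is large, which is where the mass of $\sum_j p_{n-j}$ concentrates. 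I would therefore split the sum at $j=\lceil n/2\rceil$ and estimate each half with its matching property.

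First I would treat the low-index block $1\le j\le \lceil n/2\rceil$. Here $n-j\ge \lfloor n/2\rfloor$, so property (i) gives $p_{n-j} < (n-j+1)^{\alpha-1}\lesssim n^{\alpha-1}$ because $\alpha-1<0$. Factoring this out leaves $\sum_j j^{-\gamma}$. When $\gamma>1$ this partial sum is dominated by the convergent series $\sum_{j=1}^\infty j^{-\gamma}=\zeta(\gamma)<\infty$, giving a contribution $\lesssim n^{\alpha-1}$; when $\gamma<1$ it is $\lesssim n^{1-\gamma}$, giving $\lesssim n^{\alpha-1}\cdot n^{1-\gamma}=n^{\alpha-\gamma}$. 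Next I would treat the high-index block $\lceil n/2\rceil< j\le n$, where $j>n/2$ forces $j^{-\gamma}\lesssim n^{-\gamma}$; factoring this out and applying property (iii) to the remaining $\sum_j p_{n-j}\lesssim n^\alpha$ yields a contribution $\lesssim n^{\alpha-\gamma}$ in both cases. For $\gamma>1$ one notes $n^{\alpha-\gamma}\le n^{\alpha-1}$, so this block is absorbed into the target rate; for $\gamma<1$ it matches the low-index block directly.

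Summing the two blocks yields $S_n\lesssim n^{\alpha-1}$ for $\gamma>1$ and $S_n\lesssim n^{\alpha-\gamma}$ for $\gamma<1$, which is the stated power law. The $\gamma>1$ case needs nothing further: the convergent tail already fixes both the exponent and the constant, so $\tfrac{1}{\Gamma(1-\gamma)}S_n\lesssim \tfrac{n^{\alpha-1}}{\Gamma(1-\gamma)}$ holds with an absolute constant.

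The routine part above only controls the power of $n$; the delicate part is recovering the sharp constant $\tfrac{1}{\Gamma(1-\gamma+\alpha)}$ in the case $\gamma<1$, since the crude splitting leaves an unspecified multiplicative factor. To pin this down I would pass to the continuous Riemann--Liouville fractional integral via the refined asymptotics $p_m\sim m^{\alpha-1}/\Gamma(\alpha)$, which rests on the inverse-convolution identity (ii): then $\tfrac{1}{\Gamma(1-\gamma)}\sum_j p_{n-j}\,j^{-\gamma}$ is a Riemann-sum approximation of $\tfrac{1}{\Gamma(1-\gamma)}\,I^\alpha(t^{-\gamma})\big|_{t=t_n}$, and the identity $I^\alpha(t^{-\gamma})=\tfrac{\Gamma(1-\gamma)}{\Gamma(1-\gamma+\alpha)}\,t^{\alpha-\gamma}$ produces exactly the factor $\tfrac{n^{\alpha-\gamma}}{\Gamma(1-\gamma+\alpha)}$. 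I expect the main obstacle to be the careful control of the Riemann-sum error near the singularity of $t^{-\gamma}$ at $t=0$; alternatively, this constant can simply be inherited from the corresponding estimate in \cite{liao-li}, which the present lemma reduces from by dropping the $\tau$-dependent factor.
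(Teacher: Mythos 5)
Your splitting argument correctly produces the power laws $n^{\alpha-1}$ for $\gamma>1$ and $n^{\alpha-\gamma}$ for $\gamma<1$; for $\gamma>1$ it is essentially the paper's own proof, and nothing more is needed there. The genuine gap is in the case $\gamma<1$, and you have located it yourself: the content of the lemma is not the exponent but the constant $1/\Gamma(1-\gamma+\alpha)$, because in the induction for Lemma \ref{lemma:2}(ii) the lemma is invoked with $\gamma=\sigma_2-k\alpha$ for $k=0,1,2,\dots$, so the implied constant must stay uniform as $\gamma$ becomes large and negative; a per-step loss multiplies up across the induction and destroys the Mittag--Leffler series. Your treatment of the high-index block is structurally unable to deliver this: bounding $j^{-\gamma}$ by $n^{-\gamma}$ and using the mass bound of Lemma \ref{lemma:recursionCoefficient}(iii), $\sum_j p_{n-j}\lesssim n^{\alpha}$, overshoots the target $\Gamma(1-\gamma)\,n^{\alpha-\gamma}/\Gamma(1-\gamma+\alpha)$ by a factor of order $\Gamma(1-\gamma+\alpha)/\Gamma(1-\gamma)\sim(1-\gamma)^{\alpha}$, which is exactly the non-uniformity that must be avoided; near $j=n$ one has to exploit the pointwise decay $p_{n-j}<(n-j+1)^{\alpha-1}$, not just the total mass. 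Moreover, the repair you propose is not available within the paper: Lemma \ref{lemma:recursionCoefficient} provides only the one-sided bound $p_m<(m+1)^{\alpha-1}$ and convolution identities, not the asymptotics $p_m\sim m^{\alpha-1}/\Gamma(\alpha)$, and you leave the Riemann-sum error control near the singularity as an acknowledged open obstacle (deferring to \cite{liao-li} is not a proof).

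The step you are missing is much simpler than what you outline, because the claim is one-sided and only an upper bound on $p_{n-j}$ is required. The paper does not split at all for $\gamma<1$: it inserts $p_{n-j}<(n-j+1)^{\alpha-1}$ into the whole sum, compares the sum with an integral by a monotone sum-versus-integral comparison, and evaluates
\[
\sum_{j=1}^{n-1}(n-j+1)^{\alpha-1}j^{-\gamma}\;\lesssim\;\int_{0}^{n}s^{-\gamma}(n-s)^{\alpha-1}\,ds\;=\;n^{\alpha-\gamma}\,\frac{\Gamma(1-\gamma)\Gamma(\alpha)}{\Gamma(1-\gamma+\alpha)},
\]
the Beta-function identity supplying the Gamma ratio. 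Dividing by $\Gamma(1-\gamma)$ gives exactly $\Gamma(\alpha)\,n^{\alpha-\gamma}/\Gamma(1-\gamma+\alpha)$, with $\Gamma(\alpha)$ a harmless $\gamma$-independent factor. This is precisely the discrete shadow of the Riemann--Liouville identity you quote, but it is obtained from property (i) alone --- no asymptotics of $p_m$, no two-sided Riemann-sum limit, and no singularity analysis are needed.
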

\begin{proof}
It follows from \eqref{Lem-0} in Lemma \ref{lemma:recursionCoefficient} and Lemma \ref{lem-trun} that
\begin{eqnarray*}
\sum_{j=1}^n p_{n-j}j^{-\gamma}  &\lesssim&  \sum_{j=1}^{n-1}(n-j+1)^{\alpha-1}j^{-\gamma }+n^{-\gamma}\nonumber\\
 &\lesssim&  (\frac{n}{2})^{\alpha-1}\sum_{j=1}^{\lceil n/2\rceil}j^{-\gamma }+(\frac{n}{2})^{\alpha-\gamma}\sum_{j={\lceil n/2\rceil}+1}^{n-1}(n-j+1)^{\alpha-1}j^{-\gamma}+\tau^{\sigma-\alpha}n^{-\gamma}\nonumber\\
  &\lesssim&  \Big( n^{\alpha-1}+n^{\alpha-\gamma}\int_0^{n}s^{-\alpha}(n-s)^{\alpha-1}ds+ n^{-\gamma}  \Big)\nonumber\\
   &\lesssim& n^{\alpha-1},
\end{eqnarray*}
where we have noted that
\[ \sum_{j=1}^nj^{-\gamma}<\infty,\quad \gamma>1. \]
For $\gamma<1$, it holds that
\begin{eqnarray}
 \frac{1}{\Gamma(1-\gamma)}\sum_{j=1}^n p_{n-j} j^{-\gamma} &\lesssim&  \frac{1}{\Gamma(1-\gamma)} \sum_{j=1}^{n-1}(n-j+1)^{\alpha-1}j^{-\gamma}+ \frac{1}{\Gamma(1-\gamma)}n^{-\gamma}\nonumber\\
                       &\lesssim&   \frac{1}{\Gamma(1-\gamma)}\sum_{j=1}^{n-1}\int_{j-1}^j s^{-\gamma}(n+1-s)^{\alpha-1}+ \frac{1}{\Gamma(1-\gamma)}n^{-\gamma}\nonumber\\
                        &\lesssim&  \frac{1}{\Gamma(1-\gamma)}\int_{0}^{n} s^{-\gamma}(n-s)^{\alpha-1}+ \frac{1}{\Gamma(1-\gamma)}n^{-\gamma}\nonumber\\
                        &\lesssim&  \frac{n^{\alpha-\gamma}}{\Gamma(1-\gamma)}\beta{(1-\gamma, \alpha)}+ \frac{1}{\Gamma(1-\gamma)}n^{-\gamma}\nonumber\\
                        &\lesssim&  \frac{n^{\alpha-\gamma}}{\Gamma(1-\gamma+\alpha)}.
\end{eqnarray}
This completes the proof.
\end{proof}

\begin{Lem}\label{lemma:2}
Let $Z_1=(n^{-\sigma_1}, (n-1)^{-\sigma_1},\cdots,1^{-\sigma_1})^T,  ~Z_2=(n^{-\sigma_2}, (n-1)^{-\sigma_2},\cdots,1^{-\sigma_2})^T, Z_3=(1, 1,\cdots,1)^T \in R^n$ with $\sigma_1>1$ and $\sigma_2<1$,  and
\begin{equation}\label{matr-0}
 J=2\Gamma(2-\alpha)\lambda\tau^\alpha\left[\begin{matrix}
    0 &~ p_1   &~\cdots &p_{n-2}  & p_{n-1} \\
    0&~  0    &~\cdots& ~p_{n-3}& ~p_{n-2}\\
    \vdots&~ \vdots    &~ \ddots & ~\vdots& ~\vdots\\
    0 & ~0  &~\cdots&~ 0&~p_1 \\
    0& ~0& ~\cdots& ~0& ~0\\
\end{matrix}\right]_{n\times n}.
\end{equation}
Then, it holds that
\begin{itemize}
\item [{(i)}] $J^i=0,~~i\geq n$;
\item [{(ii)}]
$J^m Z_2 \leq \frac{\Gamma(1-\sigma_2)(2\Gamma(2-\alpha)\lambda \tau^\alpha)^m}{\Gamma(1-\sigma_2+m\alpha)}
\Big( n^{m\alpha-\sigma_2},  (n-1)^{m\alpha-\sigma_2},
\cdots,1^{m\alpha-\sigma_2}\Big)^T$,\\
~~ $m=0,1,2,\cdots$;
\item [{(iii)}]
$\sum\limits_{j=1}^iJ^jZ_2 \lesssim \tau^\alpha\Big( n^{\alpha-\sigma_2}E_{\alpha,1-\sigma_2}(2\Gamma(2-\alpha)\lambda t_n^\alpha), (n-1)^{\alpha-\sigma_2} E_{\alpha,1-\sigma_2}(2\Gamma(2-\alpha)\lambda t_{n-1}^\alpha) ,
\cdots,\nonumber\\
 1^{\alpha-\sigma_2}E_{\alpha,1-\sigma_2}(2\Gamma(2-\alpha)\lambda t_1^\alpha))\Big)^T$,~~ $i \geq n$,
\item [{(iv)}] $J Z_1 \lesssim 2\Gamma(2-\alpha)\lambda\tau^\alpha
\Big( n^{\alpha-1}, (n-1)^{\alpha-1},
\cdots,1^{\alpha-1}\Big)^T$;
\item [{(v)}]
$\sum\limits_{j=1}^\infty J^jZ_1\lesssim 2\Gamma(2-\alpha)\lambda\tau^\alpha\Big( n^{\alpha-1}E_{\alpha,\alpha}(2\Gamma(2-\alpha)\lambda t_n^\alpha), (n-1)^{\alpha-1} E_{\alpha,\alpha}(2\Gamma(2-\alpha)\lambda t_{n-1}^\alpha) ,
\cdots,
 1^{\alpha-1}E_{\alpha,\alpha}(2\Gamma(2-\alpha)\lambda t_1^\alpha))\Big)^T$
,~~ $i \geq n$;
\item [{(vi)}]
$\sum\limits_{j=1}^\infty J^jZ_3\lesssim \tau^\alpha\Big( n^{\alpha}E_{\alpha,1-\sigma_2}(2\Gamma(2-\alpha)\lambda t_n^\alpha), (n-1)^{\alpha} E_{\alpha,1-\sigma_2}(2\Gamma(2-\alpha)\lambda t_{n-1}^\alpha) ,
\cdots,\nonumber\\
 1^{\alpha}E_{\alpha,1-\sigma_2}(2\Gamma(2-\alpha)\lambda t_1^\alpha))\Big)^T$.
\end{itemize}
\end{Lem}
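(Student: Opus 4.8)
The plan is to treat the six assertions as one induction engine: part (ii) is the workhorse, and parts (iii)--(vi) are then obtained by summing the geometric-type series it produces and recognizing Mittag--Leffler functions. Throughout I abbreviate $\kappa=2\Gamma(2-\alpha)\lambda\tau^\alpha$, so that $J$ is the strictly upper-triangular Toeplitz matrix whose $(r,s)$-entry equals $\kappa\,p_{s-r}$ for $s>r$. Assertion (i) is then immediate from linear algebra: a strictly upper-triangular $n\times n$ matrix is nilpotent of index at most $n$, whence $J^i=0$ for $i\ge n$. This is exactly what makes the ``infinite'' sums in (iii), (v), (vi) collapse to finite ones, so no convergence issue arises beyond bounding them by the full nonnegative series.

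For (ii) I would argue by induction on $m$, keeping the estimate componentwise. The base case $m=0$ holds with equality since $J^0=I$. For the inductive step, writing the matrix power as an $m$-fold discrete convolution of the kernel $\{p_k\}$, the entry of $J^{m+1}Z_2$ aligned with index $\ell$ reduces to $\kappa\sum_{u\ge1}p_{\ell-u}\,u^{m\alpha-\sigma_2}$. The crux is therefore a sharp power-convolution estimate
\[
\sum_{u=1}^{\ell-1}p_{\ell-u}\,u^{\beta}\le\frac{\Gamma(\beta+1)}{\Gamma(\beta+1+\alpha)}\,\ell^{\beta+\alpha},\qquad \beta=m\alpha-\sigma_2,
\]
which refines and iterates Lemma \ref{lemmar}. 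Combining the pointwise bound $p_k<(k+1)^{\alpha-1}$ from Lemma \ref{lemma:recursionCoefficient}(i) with a Beta-integral comparison against $\int_0^\ell(\ell-s)^{\alpha-1}s^{\beta}\zd s$, and using the complementary-convolution identity of Lemma \ref{lemma:recursionCoefficient}(ii) to pin the constant, advances the Gamma-function ratio from $\Gamma(1-\sigma_2+m\alpha)^{-1}$ to $\Gamma(1-\sigma_2+(m+1)\alpha)^{-1}$, matching the claim. I expect carrying these exact Gamma constants through the induction, rather than a crude $\lesssim$, to be the main obstacle.

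With (ii) in hand, (iii) follows by summing over $m$: the entry aligned with $\ell$ of $\sum_{j\ge1}J^jZ_2$ is dominated by $\Gamma(1-\sigma_2)\,\ell^{-\sigma_2}\sum_{m\ge1}\frac{(\kappa\ell^\alpha)^m}{\Gamma(1-\sigma_2+m\alpha)}$, and the identity $\kappa\ell^\alpha=2\Gamma(2-\alpha)\lambda\,t_\ell^\alpha$ converts this series into $\tau^\alpha\ell^{\alpha-\sigma_2}$ times a Mittag--Leffler function $E_{\alpha,\,1-\sigma_2}(2\Gamma(2-\alpha)\lambda\,t_\ell^\alpha)$. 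Assertion (vi) is simply the case $\sigma_2=0$ applied to the constant vector $Z_3$, yielding the profile $\tau^\alpha\ell^{\alpha}$ times the same Mittag--Leffler factor.

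For (iv) I would apply $J$ once to $Z_1$ and invoke the $\gamma>1$ branch of Lemma \ref{lemmar} with $\gamma=\sigma_1$: the summable tail of $u^{-\sigma_1}$ removes one power and produces the faster-decaying profile $\kappa\,\ell^{\alpha-1}$. Finally (v) is obtained by chaining (iv) into (ii): since $JZ_1\lesssim\kappa\,(n^{\alpha-1},\dots,1^{\alpha-1})^T$, the vector $JZ_1$ is of $Z_2$-type with effective exponent $\sigma_2=1-\alpha$, so feeding it into the series of (ii)--(iii) and summing gives $\kappa\,\ell^{\alpha-1}E_{\alpha,\alpha}(2\Gamma(2-\alpha)\lambda\,t_\ell^\alpha)$, as claimed. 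The only care needed in (iii)--(vi) is to read off the correct second index of the Mittag--Leffler function from the shift of the summation variable.
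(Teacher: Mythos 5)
Your proposal follows essentially the same route as the paper's own proof: nilpotency of the strictly upper-triangular $J$ for (i), componentwise induction on $m$ for (ii), summation of the resulting series into Mittag--Leffler functions for (iii), the $\gamma>1$ branch of the convolution lemma for (iv), chaining $JZ_1$ as a $Z_2$-type vector with effective exponent $1-\alpha$ for (v), and the case $\sigma_2=0$ for (vi). The ``crux'' power-convolution estimate you plan to establish is precisely the $\gamma<1$ branch of Lemma \ref{lemmar} rewritten with $\beta=-\gamma$ (including its Gamma-ratio constant), so the paper simply invokes that lemma where you would re-derive it.
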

\begin{proof}
Since that $J$ is an upper triangular matrix, one can check (i) holds .

Now, we prove (ii) by using the mathematical induction.
Firstly, (ii) holds for $m=0$. Suppose that (ii) holds for $m=k$.
\begin{eqnarray}
J^{k+1}Z_2&=& J (J^{k}Z_2) \leq  \frac{\Gamma(1-\sigma_2)(2\Gamma(2-\alpha)\lambda \tau^\alpha)^k}{\Gamma(1-\sigma_2+k\alpha)}J\Big( n^{k\alpha-\sigma_2},  (n-1)^{k\alpha-\sigma_2},
\cdots,1^{k\alpha-\sigma_2}\Big)^T\nonumber\\
&= &
\frac{\Gamma(1-\sigma_2)(2\Gamma(2-\alpha)\lambda \tau^\alpha)^{k+1}}{\Gamma(1-\sigma_2+k\alpha)}\Big( \sum_{i=1}^{n-1}p_{n-i}i^{1-\sigma_2+k\alpha},  \sum_{i=1}^{n-2}p_{n-1-i}i^{1-\sigma_2+k\alpha},
\cdots,0\Big)^T\nonumber\\
&\leq &
\frac{\Gamma(1-\sigma_2)(2\Gamma(2-\alpha)\lambda \tau^\alpha)^{k+1}}{\Gamma(1-\sigma_2+k\alpha+\alpha)}\Big( n^{k\alpha-\sigma_2+\alpha},  (n-1)^{k\alpha-\sigma_2+\alpha},
\cdots,1^{k\alpha-\sigma_2+\alpha}\Big)^T,\nonumber\\
\end{eqnarray}
where in the last inequality, we have noted Lemma \ref{lemmar}.
Therefore, (ii) holds for $m=k+1$. This completes the mathematical induction and the proof.

Next, we show (iii) holds. Note that (i) implies that
$\sum_{j=1}^iJ^jZ_2
 = \sum_{j=1}^{n-1}J^{j}Z_2$ for $i\geq n$, and by (iii), we get
\begin{eqnarray*}
\sum_{j=1}^{n-1}J^{j}Z_2
&\lesssim&  \tau^\alpha\sum_{j=1}^{n-1}
\Big( \frac{(2\Gamma(2-\alpha)\lambda \tau^\alpha n^\alpha)^j}{\Gamma(1-\sigma_2+j\alpha)}n^{\alpha-\sigma_2}, \frac{(2\Gamma(2-\alpha)\lambda \tau^\alpha(n-1)^\alpha)^j}{\Gamma(1-\sigma_2+j\alpha)}(n-1)^{\alpha-\sigma_2},\nonumber\\
&&\cdots,\frac{(2\Gamma(2-\alpha)\lambda \tau^\alpha1^\alpha)^j}{\Gamma(1-\sigma_2+j\alpha)}1^{\alpha-\sigma_2}\Big)^T \nonumber\\
&\lesssim&  \tau^\alpha\Big( n^{\alpha-\sigma_2}E_{\alpha,1-\sigma_2}(2\Gamma(2-\alpha)\lambda t_n^\alpha), (n-1)^{\alpha-\sigma_2} E_{\alpha,1-\sigma_2}(2\Gamma(2-\alpha)\lambda t_{n-1}^\alpha) ,
\cdots,\nonumber\\
&& 1^{\alpha-\sigma_2}E_{\alpha,1-\sigma_2}(2\Gamma(2-\alpha)\lambda t_1^\alpha))\Big)^T
\end{eqnarray*}
where
\[ E_{\alpha,\beta}(z)=\sum_{k=0}^\infty \frac{z^k}{\Gamma(k\alpha + \beta)}.  \]

For inequality (iv), one can check it holds by using Lemma \ref{lemmar}.

For inequality (v), we have
\[ \sum\limits_{j=1}^\infty J^jZ_1= \sum\limits_{j=0}^\infty J^j (JZ_1)\lesssim  2\Gamma(2-\alpha)\lambda\tau^\alpha\sum\limits_{j=0}^\infty J^j
\Big( n^{\alpha-1}, (n-1)^{\alpha-1},
\cdots,1^{\alpha-1}\Big)^T.\]
Then, the result (v) holds by using  (iii) and the fact $1-\alpha<1$.

Finally, let $\sigma_2=0$ in (iii), one can obtain (vi). This completes the proof.
\end{proof}

\subsection{Proof of the Gr\"onwall inequality} Now, we are ready to prove the refined Gr\"onwall inequality in Lemma \ref{DGineq}.
\begin{proof}
It follows from the definition of $L1$-approximation \eqref{l1sch} that
\begin{equation} \label{GWN}
\sum_{k=1}^ja_{j-k}{\delta_t y_k}\leq \Gamma(2-\alpha)\tau^{\alpha}
\lambda_1 y_j
+\Gamma(2-\alpha)\tau^{\alpha}(\mu_1 j^{-\sigma_1} +\mu_1 j^{-\sigma_2}+\eta).
\end{equation}
Multiplying \eqref{GWN} by $p_{n-j}$ and then summing over
for $j$ from $1$ to $n$, we get
\begin{align}\label{ineq1.1}
\sum_{j=1}^np_{n\!-\!j}\sum_{k=1}^ja_{j\!-\!k}{\delta_t y_k}\!
\leq \! \Gamma(2\!-\!\alpha)\tau^{\alpha}\sum_{j=1}^np_{n\!-\!j}\lambda_1y_j
+\Gamma(2\!-\!\alpha)\tau^{\alpha}\sum_{j=1}^{n}p_{n\!-\!j}(j^{-\sigma_1}+j^{-\sigma_2}+\eta).
\end{align}
Applying \eqref{Lem-1} in Lemma \ref{lemma:recursionCoefficient} gives
\begin{align}\label{ineq1.2}
\sum_{j=1}^np_{n-j}\sum_{k=1}^ja_{j-k}{\delta_ty_k}=\sum_{k=1}^n\delta_ty_k\sum_{j=k}^np_{n-j}a_{j-k}
=\sum_{k=1}^n\delta_ty_k=y_n-y_0, \quad n\geq1.
\end{align}

Substituting \eqref{ineq1.2} into \eqref{ineq1.1} gives
\begin{align}
y_n
&\leq y_0+ \Gamma(2\!-\!\alpha)\tau^{\alpha}\sum_{j=1}^np_{n\!-\!j}\lambda_1y_j
+\Gamma(2\!-\!\alpha)\tau^{\alpha}\sum_{j=1}^{n}p_{n\!-\!j}(j^{-\sigma_1}+j^{-\sigma_2}+\eta)\nonumber\\
&= y_0+\Gamma(2-\alpha)\tau^{\alpha}y_n+ \Gamma(2-\alpha)\tau^{\alpha}\lambda_1\sum_{j=1}^{n-1}p_{n-j}y_j\nonumber\\
&~~~+\Gamma(2\!-\!\alpha)\tau^{\alpha}\sum_{j=1}^{n}p_{n\!-\!j}(j^{-\sigma_1}+j^{-\sigma_2}+\eta)\nonumber\\
&\leq y_0+\frac{1}{2}y_n+ \Gamma(2-\alpha)\tau^{\alpha}\lambda_1\sum_{j=1}^{n-1}p_{n-j}y_j
+\Gamma(2\!-\!\alpha)\tau^{\alpha}\sum_{j=1}^{n}p_{n\!-\!j}(j^{-\sigma_1}+j^{-\sigma_2}+\eta),\label{ineq1.4}
\end{align}
whenever  $\tau\leq \sqrt[\alpha]{\frac{1}{2\Gamma(2-\alpha)\lambda_1}}$.

Therefore, we get
\begin{eqnarray}\label{ineq1.5}
y_n\!
\leq\! 2 y_0\!+\!2\Gamma(2\!-\!\alpha)\tau^{\alpha}\lambda_1\sum_{j=1}^{n-1}p_{n-j}y_j
\!+\!\Gamma(2\!-\!\alpha)\tau^{\alpha}\sum_{j=1}^{n}p_{n\!-\!j}(j^{-\sigma_1}\!+\!j^{-\sigma_2}\!+\!\eta).
\end{eqnarray}

Let $Y=(y_n,y_{n-1},\cdots,y_1)^T$ and $Z=C\Gamma(2\!-\!\alpha)\tau^{\alpha}(\mu_1Z_1+\mu_2Z_2+\eta Z_3)$.
Then,  \eqref{ineq1.5} can be rewritten in a matrix form as follows
\begin{equation}\label{matr-1}
Y \leq JY + JZ.
\end{equation}

As a result, we have
\begin{eqnarray}
\label{gron-ineq}
 Y &\leq& J V + JZ
\leq J(JY+JZ)+ JZ
=J^2Y
+\sum_{j=1}^2J^jZ
\nonumber\\
 &\leq&\cdots \leq J^nY+\sum_{j=1}^{n}J^jZ =  \mu_1\sum_{j=1}^{n-1}J^jZ_1+ \mu_2\sum_{j=1}^{n-1}J^jZ_2 +\eta\sum_{j=1}^{n-1}J^jZ_3\;.
 \end{eqnarray}
Now, by (iv),(v) and (vi) in Lemma \ref{lemma:2}, we obtain \eqref{frac-re} and complete the proof. \end{proof}

\section{Numerical examples}\label{sec4}

In this section, we present several numerical results to illustrate the sharp pointwise error estimates.\\
{\bf Example 1.} ~Consider the one-dimensional nonlinear subdiffusion problems
  \begin{align}
~\partial_t^\alpha u= u_{xx}+\sqrt{1+u^2}+g(x,t),
\quad
(x,t)\in (0,\pi)\times(0,t_N],
\label{exam-1d1}
\end{align}
where the initial condition and $g(x,t)$ are specially chosen such that the problem admits an exact solution in the form of
\[ u(x,t)= t^\sigma \sin( x).\]

\begin{table}[ht]
\begin{center}
\caption{Maximum errors at $t=1$ and convergence orders in temporal direction with $M=1000$ for Example 1}
\begin{tabular}{cc|cccccc|cc}
     \hline
  $\alpha$    &$\sigma$        $ \backslash N$& $10$       &$20$           &$40$      &$80$       &$160$     &$Rate$ & expected order \\[1ex]
       \hline
     0.4   & $0.1$                   & 3.50e-2    & 2.05e-2        & 1.22e-2  & 7.37e-3   &4.46e-3   & 0.73 & $\sigma+1-\alpha$\\[1ex]
           & $0.4$                   & 1.12e-2    & 5.21e-3        & 2.46e-3  & 1.18e-3   &5.71e-4   & 1.07 & $\sigma+1-\alpha$\\[1ex]
           & $0.6$                   & 5.13e-3    & 2.10e-3        & 8.70e-4  & 3.63e-4   &1.53e-4   & 1.25 & $\sigma+1-\alpha$\\[1ex]
           & $1.2$                   & 3.98e-3    & 1.57e-3        & 6.17e-4  & 2.41e-4   &9.40e-5   & 1.36 & $2-\alpha$\\[1ex]
           & $1.8$                   & 1.30e-2    & 5.03e-3        & 1.93e-3  & 7.38e-4   &2.81e-4   & 1.39 & $2-\alpha$\\[1ex]
     \hline
   0.6     & $0.4$                   & 3.27e-2    & 1.78e-2        & 9.89e-3  & 5.54e-3   &3.13e-3   & 0.84 & $\sigma+1-\alpha$\\[1ex]
           & $0.6$                   & 1.51e-2    & 7.28e-3        & 3.54e-3  & 1.73e-3   &8.51e-4   & 1.03 & $\sigma+1-\alpha$\\[1ex]
           &$0.8$                   & 5.68e-3    & 2.49e-3        & 1.09e-3  & 4.76e-4   &2.08e-4    & 1.19 & $\sigma+1-\alpha$\\[1ex]
           & $1.2$                   & 3.98e-3    & 1.57e-3        & 6.17e-4  & 2.41e-4   &9.40e-5   & 1.36 & $2-\alpha$\\[1ex]
           & $1.8$                   & 1.30e-2    & 5.03e-3        & 1.93e-3  & 7.38e-4   &2.81e-4   & 1.39 & $2-\alpha$\\[1ex]
  \hline
\label{1d11}
\end{tabular}
\end{center}
\end{table}

\begin{table}[ht]
\begin{center}
\caption{Maximum errors as $t\rightarrow 0$ and convergence orders in temporal direction with $M=1000$ and $N=10$ for Example 1}
\begin{tabular}{cc|cccccc|cc}
     \hline
  $\alpha$    &$\sigma$    $\backslash t_N$& $1e-3$       &$1e-4$           &$1e-5$      &$1e-6$       &$1e-7$     &$Rate$ & expected order \\[1ex]
       \hline
     0.4   & $0.1$                   & 2.90e-2    & 2.39e-2        & 1.92e-2  & 1.54e-2   &1.23e-2   & 0.09 & $\sigma$\\[1ex]
           & $0.4$                   & 1.20e-3    & 5.02e-4        & 2.05e-4  & 8.22e-5   &3.28e-5   & 0.40 & $\sigma$\\[1ex]
           & $0.6$                   & 1.38e-4    & 3.67e-5        & 9.43e-6  & 2.39e-6   &6.03e-7   & 0.59 & $\sigma$\\[1ex]
           & $1.2$                   & 5.61e-7    & 3.70e-8        & 2.38e-9  & 1.51e-10  &9.57e-12   & 1.19 & $\sigma$\\[1ex]
           & $1.8$                   & 2.87e-8    & 4.71e-10       &7.58e-12  & 1.21e-13   &1.92e-15   & 1.80 & $\sigma$\\[1ex]
     \hline
   0.6     & $0.4$                   & 3.67e-3    & 1.49e-3        & 5.94e-4  & 2.37e-4   &9.42e-5   & 0.40 & $\sigma$\\[1ex]
           & $0.6$                   & 4.29e-4    & 1.09e-4        & 2.76e-5  & 6.95e-6   &1.75e-6   & 0.60 & $\sigma$\\[1ex]
           & $0.8$                   & 3.99e-5    & 6.43e-6        & 1.02e-6  & 1.62e-7   &2.57e-8    &0.80 & $\sigma$\\[1ex]
           & $1.2$                   & 1.64e-6    & 1.05e-7        & 6.64e-9  & 4.49e-10  &2.55e-11   &1.20 & $\sigma$\\[1ex]
           & $1.8$                   & 7.58e-7    & 1.21e-9        & 1.92e-11 & 3.05e-13  &4.85e-14   & 1.80 & $\sigma$\\[1ex]
  \hline
\label{1d12}
\end{tabular}
\end{center}
\end{table}

\begin{table}[ht]
\begin{center}
\caption{Maximum errors at $t=1$ and convergence orders in spatial direction with $N=1000$ for Example 1}
\begin{tabular}{cc|cccccc|cc}
     \hline
  $\alpha$    &$\sigma$    $\backslash M$& $8$       &$16$           &$24$      &$32$       &$40$     &$Rate$ & expected order \\[1ex]
       \hline
     0.4   & $0.4$                   & 9.25E-3    & 2.30e-3        & 1.02e-3  & 5.67e-4   &3.60e-4   & 2.02 & $2$\\[1ex]
           & $1.2$                   & 7.68e-3    & 1.92E-3        & 8.51e-4  & 4.79e-4   &3.06e-4   & 2.00 & $2$\\[1ex]
     \hline
   0.6     & $0.6$                   & 8.20e-3    & 2.03e-3        & 8.96e-4  & 4.98e-4   &3.14e-4   & 2.03 & $2$\\[1ex]
           & $0.2$                   & 6.79e-3    & 1.70e-3        & 7.54e-4  & 4.24e-4   &2.71e-4   & 2.00 & $2$\\[1ex]
  \hline
\label{1d13}
\end{tabular}
\end{center}
\end{table}

We investigate the convergence orders in temporal direction by setting $M=1000$ and $t_N=1$.  We show  the maximum errors at $t_N=1$ in Table \ref{1d11}. It can be seen from the results that
when $t$ is far away from $0$, the convergence order tends to $\sigma+1-\alpha$ when $0<\sigma<1$ and  to $2-\alpha$ when $1<\sigma<2$.  We also show the maximum errors as $t\rightarrow 0$ in Table \ref{1d12}. The convergence results indicate that the  convergence order tends to $\sigma$. Then, we investigate the spatial convergence orders by setting $N$=1000 and $t_N=1$. We show the numerical results in Table \ref{1d13}. Clearly, the convergence order tends to $2$. All the numerical results agree with the theoretical findings well.

{\bf Example 2.} ~Consider the nonlinear subdiffusion problems \eqref{i1} with the following conditions
  \begin{eqnarray}
&& (a) ~d=1, f(u)=\sqrt{1+u^2}~\textrm{and}~ u_0(x)=x(1-x),\Omega=(0,1),\nonumber\\
&& (b) ~d=1, f(u)=u-u^3~\textrm{and}~ u_0(x)=\sin(\pi x), \Omega=(0,1),\nonumber \\
&& (c) ~d=2, f(u)=\sqrt{1+u^2}~\textrm{and}~ u_0(x)=\sin(\pi x)\sin(\pi y),\Omega=(0,1)^2,\nonumber\\
&& (d) ~d=2, f(u)=u-u^3~\textrm{and}~ u_0(x)=x(1-x)y(1-y), \Omega=(0,\pi)^2.\nonumber
\end{eqnarray}
One can check that the initial condition $u_0\in H_0^1(\Omega)\cap H^2(\Omega)$. Therefore, we have $\sigma=\alpha$. In the numerical experiments, we take $M=1000$ for one-dimensional problem and $M=10$ for two dimensional problem. The reference solutions are obtained by using small temporal stepsizes. We list the maximum errors at $t=1$ and convergence orders in Table \ref{1d21}. One can see that the convergence order tends to $1$. We also show the maximum errors as $t\rightarrow 0$ and convergence orders in Table \ref{1d22}. Clearly,  the  convergence order tends to $\alpha$. The results further confirm the theoretical results of the present paper.

\begin{table}[ht]
\begin{center}
\caption{Maximum errors at $t=1$ and convergence orders in temporal direction for Example 2}
\begin{tabular}{cc|ccccc|c|ccccccccccc}
     \hline
      $\alpha$        & $case \backslash N$& $10$       &$20$           &$40$      &$80$       &$160$     &$Rate$  & expected order \\[1ex]
     \hline
       $0.4$         &   $(a)$            & 1.74e-4    & 8.36e-5        & 4.04e-5  & 1.94e-5   &8.99e-6   & 1.06 & 1 \\[1ex]
                     &   $(b)$            & 1.49e-3    & 7.14e-4        & 3.46e-4  & 1.66e-4   &7.69e-5   & 1.06 & 1\\[1ex]
                      &  $(c)$            & 9.63e-5    & 4.77e-5        & 2.36e-5  & 1.16e-5   &5.61e-6   & 1.03 & 1\\[1ex]
                     &   $(d)$            & 7.20e-6    & 3.57e-6        & 1.76e-6  & 8.67e-7   &4.19e-7   & 1.02 & 1\\[1ex]
     \hline
       $0.6$         &   $(a)$            & 2.15e-4    & 1.01e-4        & 4.84e-5  & 2.30e-5   &1.06e-5   & 1.08 & 1\\[1ex]
                     &   $(b)$            & 1.88e-3    & 8.84e-4        & 4.23e-4  & 2.01e-4   &9.27e-5   & 1.08 & 1\\[1ex]
                      &  $(c)$            & 1.06e-4    & 5.21e-5        & 2.57e-5  & 1.26e-5   &6.09e-6   & 1.03 & 1\\[1ex]
                     &   $(d)$            & 7.94e-6    & 3.92e-6        & 1.93e-6  & 9.47e-7   &4.57e-7   & 1.03 & 1\\[1ex]
    \hline
       $0.8$         &   $(a)$            & 2.13e-4    & 9.53e-5        & 4.44e-5  & 2.08e-5   &9.48e-6   & 1.10 & 1\\[1ex]
                     &   $(b)$            & 1.96e-3    & 8.74e-4        & 4.07e-5  & 1.90e-5   &8.66e-6   & 1.12 & 1\\[1ex]
                     &   $(c)$            & 7.91e-5    & 3.88e-5        & 1.91e-5  & 9.31e-6   &4.47e-6   & 1.04 & 1\\[1ex]
                     &   $(d)$            & 6.02e-6    & 2.95e-6        & 1.45e-6  & 7.06e-7   &3.40e-7   & 1.03 & 1\\[1ex]
  \hline
\label{1d21}
\end{tabular}
\end{center}
\end{table}

\begin{table}[ht]
\begin{center}
\caption{Maximum errors as $t\rightarrow 0$ and convergence orders in temporal direction for Example 2}
\begin{tabular}{cc|ccccc|c|ccccccccccc}
     \hline
      $\alpha$        & $case \backslash t_N$& $1e-4$       &$1e-5$           &$1e-6$      &$1e-7$       &$1e-8$     &$Rate$ & expected order \\[1ex]
     \hline
       $0.4$         &   $(a)$            & 4.70e-4        & 2.21e-5  & 9.00e-5   &3.61e-5     &1.45e-5   & 0.38   & $\alpha $\\[1ex]
                     &   $(b)$            & 3.65e-3    & 1.87e-3        & 8.36e-4  & 3.50e-4   &1.43e-4   & 0.35   & $\alpha $\\[1ex]
                     &   $(c)$            & 4.96e-3    & 3.05e-3        & 1.47e-3  & 6.34e-4   &2.61e-4   & 0.32   & $\alpha $\\[1ex]
                     &   $(d)$            & 3.31e-4     & 1.87e-4        & 8.26e-5  & 3.39e-5  & 1.37e-5   & 0.35  & $\alpha $\\[1ex]
     \hline
       $0.6$         &   $(a)$            & 1.20e-4        & 3.05e-5  & 7.70e-6   &1.94e-6     &4.87e-7   & 0.60  & $\alpha $\\[1ex]
                     &   $(b)$            & 1.14e-3    & 2.99e-4        & 7.59e-5  & 1.91e-5   &4.81e-6   & 0.59  & $\alpha $\\[1ex]
                     &   $(c)$            & 2.03e-3    & 5.46e-4        & 1.39e-4  & 3.52e-5   &8.84e-6   & 0.59  & $\alpha $\\[1ex]
                     &   $(d)$            & 1.11e-4    & 2.86e-5        & 7.23e-6  & 1.82e-6   &4.57e-7   & 0.60  & $\alpha $\\[1ex]
    \hline
       $0.8$         &   $(a)$            & 1.73e-5    & 2.75e-6        & 4.35e-7  &6.86e-8    &1.09e-8   & 0.80  & $\alpha $\\[1ex]
                     &   $(b)$            & 1.63e-4    & 2.59e-5        & 4.10e-6  & 6.51e-7   &1.03e-8   & 0.80  & $\alpha $\\[1ex]
                     &   $(c)$            & 3.06e-4    & 4.77e-5        & 7.56e-6  & 1.20e-6   &1.90e-7   & 0.80  & $\alpha $\\[1ex]
                     &   $(d)$            & 1.55e-5    & 2.46e-6        & 3.90e-7  & 6.18e-8   &9.80e-9   & 0.80  & $\alpha $\\[1ex]

  \hline
\label{1d22}
\end{tabular}
\end{center}
\end{table}

\section{Conclusions}\label{sec5}
In this paper, we consider the numerical solutions of the fully discrete scheme for solving the nonlinear subdiffusion problems.
The scheme is constructed by using the L1-scheme in temporal directions and central finite difference method for the diffusion term. The pointwise-in-time error estimates  of the L1 sheme are obtained under the regularity parameter $\sigma\in (0,1)\cup(1,2)$ by carefully using the refined discrete fractional-type Gr\"onwall inequality. The present error estimates are sharp and illustrated by several numerical experiments.

\end{document}